\documentclass[a4paper,11pt]{article}
\usepackage{amsmath}
\usepackage{amsfonts}
\usepackage[latin9]{inputenc}
\usepackage[english]{varioref}
\usepackage{dcolumn}
\usepackage[height=22cm , width = 16cm , top = 2.5cm , left = 3cm, a4paper]{geometry}
\usepackage[a4paper]{geometry}
\usepackage[final]{graphicx}
\usepackage{epsfig}
\usepackage{pstricks}
\usepackage{psfrag}
\usepackage{rotating}
\usepackage{booktabs}
\usepackage{delarray}
\usepackage{amssymb}
\usepackage{rotating}
\usepackage{subfigure}
\usepackage{layout}
\usepackage{amsthm}
\usepackage{hyperref}
\usepackage{xcolor}
\usepackage{setspace}
{\newcommand{\dd}{\,d}
{
{
{\newcommand{\pk}{,}

\newtheorem{thm}{Theorem}[section]

\newtheorem{lem}[thm]{Lemma}
\newtheorem{propos}[thm]{Proposition}
\newtheorem{rem}[thm]{Remark}

\parindent 0pt
\newcommand{\enter}{\bigskip}

%%%%%%%%%%%%%%%
\begin{document}
\author{Sanjiv Kumar Bariwal${}^1$, Prasanta Kumar Barik${}^2$, Ankik Kumar Giri${}^3$, Rajesh Kumar${}^1$\footnote{{\it{${}$ Email address:}} rajesh.kumar@pilani.bits-pilani.ac.in}\\
\footnotesize ${}^1$Department of Mathematics, Birla Institute of Technology and Science, Pilani,\\ \small{ Pilani-333031, Rajasthan, India}\\
\footnotesize ${}^2${School of Mathematics, Indian Institute of Science Education and Research Thiruvananthapuram,} \\ \small{Thiruvananthapuram-695551, Kerala, India}\\
\footnotesize ${}^3$Department of Mathematics, Indian Institute of Technology Roorkee,\\ \small{ Roorkee-247667, Uttarakhand, India}\\
}

\title {Numerical analysis for coagulation-fragmentation equations with singular rates}

\maketitle

%%%%%%%%%%%%%%%%%%%%%%%%%%%% %%%%%%%%%%%%%%%%%
\hrule \vskip 8pt

\begin{quote}
{\small {\em\bf Abstract}}: This article deals with the convergence of finite volume scheme (FVS) for solving coagulation and multiple fragmentation equations having locally bounded coagulation kernel but singularity near the origin due to fragmentation rates. Thanks to the Dunford-Pettis and De La Vall$\acute{e}$e-Poussin theorems which allow us to have the convergence of numerically truncated solution towards a weak solution of the continuous model using a weak $L^1$ compactness argument. A suitable stable condition on time step is taken to achieve the result. Furthermore, when kernels are in $W_{loc}^{1,\infty}$ space, first order error approximation is demonstrated for a uniform mesh. It is numerically validated by attempting several test problems.
\end{quote}
{\bf Keywords:}  Finite volume scheme, Coagulation, Fragmentation, Convergence, Singularity, Error.

\section{Introduction}
The equations, in this article illustrate the binary coagulation and multiple fragmentation processes in which two particles can coalesce to form a large particle and one particle breaks into arbitrary small fragments. Such processes are seen in a variety of physical system, for instance in aerosol science, astrophysics, colloidal chemistry, polymer science, meteorology (merging of drops in atmospheric clouds), crystallization, see \cite{lee2001survey, gokhale2009disintegration} and references therein. The recognition of each particle is defined by its size, in particular, mass or volume which could be either a positive integer (discrete system) or a positive real number (continuous system). The coagulation-fragmentation model describes the time evolution of the particle size distribution.\enter

If $c(t,x) \ge 0$ is the particle number density function of having particles of volume $x \in \mathbb{R}_{>0}:=]0,\infty[$ and at time $t\in [0, \infty [$ in a homogeneous physical system, then the mathematical formulation due to simultaneous aggregation and breakage processes is governed by the following well known population balance equations (PBEs),
\begin{align}\label{maineq}
\frac{\partial c(t,x)}{\partial t}\ \ =\ \ &\frac{1}{2}\int_0^x K(y,x-y)c(t,y)c(t,x-y)dy-\int_0^\infty K(x,y)c(t,x)c(t,y)dy \nonumber\\
			 & +  \int_x^\infty B(x,y)S(y)c(t,y)dy-S(x)c(t,x)
\end{align}
with the given initial data
\begin{align}
			c(0,x)\ \ = \ \ c^{in}(x) \geq 0, \ \ \ x \in \mathbb{R}_{>0}.
\end{align}
Here $K(x,y)$ expresses the rate at which particles of sizes $x$ and $y$ collide and gives birth to a particle of size $x+y$. It is assumed to be non-negative ($K(x,y)\geq 0$) and symmetric, i.e. $K(x,y)=K(y,x)$. The fragmentation kernels are defined by the breakage function $B(x,y)$ and the selection rate $S(y)$ as follows:
\begin{itemize}
\item $B(x,y)$ indicates the rate at which particles of size $y$ breaks out, and produces  particles of sizes $x$. Also $ B(x,y)\neq 0$ only for $x<y$.
\item $S(y)$ denotes the rate at which particles of sizes $y$ are selected to break.
\end{itemize}
The breakage function holds the following properties:
\begin{align*}
		\int_{0}^{y} B(x, y)dx =\zeta(y), \hspace{0.4cm} \int_{0}^{y} xB(x, y)dx=y	
\end{align*}
for $\zeta(y)$ being the total number of daughter fragments due to splitting of a particle of size $y$ and second relation is the necessary condition for the mass conservation. Assume that $\sup_{y} \zeta(y) = \eta$, for every $y \in \mathbb{R}_{>0}$. The first and third integrals in the equation (\ref{maineq}) give the production of a particle of size $x$ while the second and fourth integrals involve in the disappearance of a particle of size $x$ due to the aggregation and fragmentation processes, respectively. \\

Besides solution, it should be focused here that the integral properties like moments are also of useful. The $jth$ moment of the particle size distribution is defined as
\begin{align}\label{I:exactmoment}
	\mu_j(t) := \int_{0}^{\infty} x^j c(t,x) dx.
\end{align}
For a particular value of $j=0$, zeroth moment is proportional to the total number of the particles while for $j=1$, it describes the total mass of the particles in the system. One can easily show that the zeroth moment decreases by coagulation and increases by breakage processes while the total mass stays constant for some specific rates. For the total mass conservation, the integral equality
\begin{align*}
	\int_0^\infty x c(t,x) \dd x = \int_0^\infty x c^{in}(x) \dd x, \ \
	t\geq 0 \pk
\end{align*}
holds. \enter

Several researchers have studied the existence of a solution of PBEs, see in \cite{melzak1957scalar, mcleod1962infinite, ball1990discrete}.
% and the first discussion was done by Melzok \cite{melzak1957scalar} and Mcleod \cite{mcleod1962infinite}. Further, Ball and Carr \cite{ball1990discrete} have described mass conservative solutions of the discrete system of equations.
 In continuing, Stewart \cite{stewart1990coagulation, stewart1991density,stewart1989global} as well as Lauren{\c{c}}ot \cite{laurenccot2000class, laurenccot2002discrete}, dealt with continuous PBEs using compactness arguments in the space of integrable functions.
% The study of existence and uniqueness results for Coagulation-multiple fragmentation having bounded kernels was discussed by Melzak \cite{melzak1957scalar}.
  However, a lot of work has been developed related to the existence of weak solutions for coagulation-fragmentation equations with non-increasing mass for a wide set of coagulation and breakage functions \cite{giri2012weak, giri2011uniqueness}. In the study, there are two important ways to approach the C-F equations which are deterministic, other is stochastic.\\

In the past few years, several researchers concerned about the existence and uniqueness theory of PBEs with singular coagulation and fragmentation kernels, see \cite{saha2015singular, cueto2013singular, camejo2015regular}. In \cite{cueto2013singular, camejo2015regular}, researchers have done work by considering the bound over the coagulation kernel as $K(x,y)=\beta \frac{(1+x)^{\lambda}(1+y)^{\lambda}}{(xy)^{\sigma}}$, where $\beta{>0}$ is a constant, $\sigma \in [0,\frac{1}{2}[$, $\lambda-\sigma \in [0,1[$ and the singularity occurs on the both coordinate axes, i.e. $x=0, y=0.$ The bound over the selection rate is considered to be $S(x)\leq x^{\theta}$, where $\theta \in [0,1[.$ Further, in  \cite{saha2015singular}, the author extended the above work for the range of $\lambda-\sigma \in [0,1].$ Here, coagulation kernel contains a wide set of functions as compared to the kernels used in \cite{cueto2013singular, camejo2015regular}. Recently, in \cite{barik2020existence}, the author has discussed the existence of mass-conserving weak solutions to the continuous coagulation and multiple fragmentation equation by extending the previous results in \cite{cueto2013singular, camejo2015regular}, by considering $\lambda-\sigma \in [0, 1]$ for the linear coagulation rate to the large size particles and singularities for smaller ones.\\

From the numerical point of view, Bourgade and Filbet \cite{bourgade2008convergence} have treated binary coagulation-fragmentation equations using finite volume approximation. They described the convergence of numerically discretized solution towards a weak solution of the continuous model with locally bounded kernels in weighted $L^1$ space. Kumar and Kumar\cite{kumarfinite} extended the results for multiple fragmentation by following similar convergence analysis, and having locally bounded breakage kernels. Since, all the existing literature discussed the convergence analysis for locally bounded kernels, and having the availability of mathematical result on singular kernels in \cite{barik2020existence}, it would be interesting to study numerically, this convergence analysis for finite volume scheme having singularity near the origin for the fragmentation kernels.\\

Therefore, our work is inspired by taking the coupled problems mentioned above. The aim is to establish weak convergence analysis of the finite volume scheme (FVS) for the coagulation and multiple fragmentation equations having singular multiple breakage kernel and locally bounded coagulation kernels. Thanks to the Dunford-Pettis theorem for the use of weak $L^1$ compactness argument and a refined version of De La Vall$\acute{e}$e Poussin theorem.\\

We demonstrate the convergence analysis in a weighted $L^1$ space $X^+$ given by
\begin{align*}
X^{+} = \{c\in L^{1}(\mathbb{R}_{>0})\cap L^1(\mathbb{R}_{>0}, x\,dx): c\geq 0, \|c\|< \infty\}
\end{align*}
where $\|c\|= \int_{0}^{\infty}(1+x)|c(x)|\,dx$ and taking the non-negative initial condition $c^{in} \in X^{+}$. The notation $L^1(\mathbb{R}_{>0}, xdx)$ stands for the space of the Lebesgue measurable real valued functions on $\mathbb{R}_{>0}$ which are integrable with respect to the measure $x\, dx.$ Furthermore, we demonstrate theoretical and numerical error estimates for a uniform mesh using kernels in $W_{loc}^{1,\infty}$ space. It is shown that the FVS yields first-order error estimates which is verified numerically for several test problems.\\

The article is organized as follows. The non-conservative formulation of the combined coagulation and multiple fragmentation equations is discussed in the next Section \ref{divergnecechap2} together with the numerical approximations. Further in Section \ref{convergencechap2}, the main result of convergence analysis is explained for the approximated solutions using the weak compactness argument. We discuss first-order  error estimation for uniform mesh in Section \ref{errorde}. Simultaneously, the validation of error approximation is verified by some numerical simulations in Section \ref{errornu}. Finally, conclusions are made in Section 6.		

\section{Non-conservative  Formulation}\label{divergnecechap2}
	Thanks to the Leibniz integral rule, PBEs (1) for continuous coagulation-multiple fragmentation model is written as a divergence form in terms of the mass density $xc(t,x)$ as
	\begin{align}\label{chap2:equation_1st}
		\frac{x\partial c(t,x)}{\partial t}=-{\frac{\partial \mathcal C(c)(t,x)}{\partial x}}+{\frac{\partial \mathcal F(c)(t, x)}{\partial x}}, \quad (t,x)\in \mathbb{R}_{>0}^2:=]0,\infty[^2
	\end{align}
	where the continuous fluxes are being taken as
	\begin{align}\label{Coagfluxes}	
		{ \mathcal C(c)(t,x)}:= \int_0^x \int_{x-u}^\infty uK(u,v)c(t,u)c(t,v) dvdu,
	\end{align}
	for aggregation and for the breakage, the following form is obtained
	\begin{align}\label{Fragfluxes}
		{ \mathcal F(c)(t, x)}:= \int_0^x \int_x^\infty uB(u,v)S(v)c(t,v)dvdu.
	\end{align}
Throughout the article, it is assumed that the aggregation kernel $K$, the selection rate $S$ and the breakage function $B$ satisfy the following assumptions
	\begin{align}\label{aggregation funcn}
K\in L_{\text{loc}}^\infty{(\mathbb{R}_{>0} \times \mathbb{R}_{>0}) },
	\end{align}
	\begin{equation} \label{selectionrate}
S \in L_{\text{loc}}^\infty(\mathbb{R}_{>0})\  \text{and}\  S(x)=x^{1+\alpha},
	\end{equation}
	and
	\begin{equation}\label{breakage funcn}
		B(x,y)=\frac{\alpha+2}{y}\bigg(\frac{x}{y}\bigg)^{\alpha}, \ \text{for} \ 0<x <y,
	\end{equation}
	where $\alpha \in (-1, 0]$. The above class of selection rate and breakage function is a particular class of the following kinetic parameters $S(x)=x^{\gamma}$ and $B(x,y)=\frac{\alpha+2}{y}(\frac{x}{y})^{\alpha}$ for $0<x<y$ where $\gamma\in \mathbb{R}$ and $\alpha>-2$. By assuming $\gamma>0$ and $\alpha >-1$, we obtain $\eta=(\alpha+2)/(\alpha+1)$. It should be mentioned that the case of $-1<\alpha<-2$ gives a non realistic case as $\eta$ becomes infinity.\\
	
	It is clearly seen that the above conditions on selection rate and breakage function extends the case considered in \cite{kumarfinite} where the authors have put the locally bounded condition on the product of breakage and selection functions. Here, in this work, the singularity near the origin can be handled for the breakage parameter along with the coagulation problem. \\
	
	Now, in the next subsection, a numerical method to solve the equation (\ref{chap2:equation_1st}) is described. For this a finite volume approximation \cite{eymard2000finite} is taken for the volume variable $x$ while an explicit Euler method is used to discretize the time variable $t$.
	
	\subsection{Numerical Approximation}\label{numapproxchap2}
	In order to consider the more realistic case, in this section, a non-conservative truncation for the coagulation part is taken as
	\begin{align}\label{aggflux_nc}	
		{ {\mathcal C}_{nc}^{R}(c)(t,x)}:= \int_0^x \int_{x-u}^{R} uK(u,v)c(t,u)c(t,v) dvdu
	\end{align}
	by replacing $\infty$ for a positive real constant $R$ in the equation (\ref{Coagfluxes}). While for the breakage, using $R$ and equation (\ref{Fragfluxes}), a conservative approximation is considered as
	\begin{align}\label{brkflux_c}
		{ {\mathcal F}_{c}^{R}(c)(t, x)}:= \int_0^x \int_x^{R} uB(u,v)S(v)c(t,v)dvdu.
	\end{align}
	Thus, the coupled non-conservative form of the truncation is governed by
	\begin{align}\label{main}
		\left\{
		\begin{array}{lll}
			x\frac{\partial
				c}{\partial t}=-\frac{\partial
				\mathcal{C}_{nc}^R(c)}{\partial x}+\frac{\partial
				\mathcal{F}_{c}^R(c)}{\partial x}, & \hbox{$(t,x)\in \mathbb{R}_{>0}\times ]0,R]$;} \\
			&\hbox{}\\
			c(0,x)=c^{in}(x),& \hbox{$x\in ]0,R]$.}
		\end{array}
		\right.
	\end{align}
	Such truncation is chosen so that it enables for the simulation of gelation phenomena. Although, it depends on mainly the higher rate of kernels $K$ and $B$, it should be mentioned here that flux (\ref{aggflux_nc}) leads to the decrease in total mass in the system while expression (\ref{brkflux_c}) yields the total mass conservation. One can easily verify these by having
	$$\frac{d}{d t}\int_0^R x c(t,x)dx= - \mathcal{C}_{nc}^{R}(c)(t,R) \leq 0$$ in case of pure coagulation and
	$$\frac{d}{d t}\int_0^R x c(t,x)dx = 0$$ for the pure breakage case.
	
	Now, to apply the numerical scheme and to discretize the volume variable of the equation (\ref{main}), let $h\in]0,1[ $, $\mathrm{I}^h$ a positive integer such that $(x_{i-1/2})_{i\in \{0,\ldots,\mathrm{I}^h\}}$ is a mesh of $]0,R]$ having
	$$x_{-1/2}=0, \ \ x_{\mathrm{I}^h+1/2}= R, \ \ x_i=(x_{i-1/2}+x_{i+1/2})/2, \ \ \Delta x_i=x_{i+1/2}-x_{i-1/2}\leq
	h$$ and $\Lambda_i^h=]x_{i-1/2},x_{i+1/2}]$ for $i\geq 0$.
	For given integers $i$ and $j$ such that $x_{i+1/2}-x_{j}\geq 0$, define integer $\gamma_{i, j} \in \{0,\ldots,\mathrm{I}^h\}$ such that
	\begin{align*}
		x_{i+1/2}-x_{j}\in \Lambda_{\gamma_{i,j}}^h.
	\end{align*}
	For non-uniform mesh, introduce $\delta_h=\min\Delta x_i$ and consider a positive constant $L$ as
		
		\begin{align}\label{meshcondition}
			\frac{h}{\delta_h} \le L,
		\end{align}
%	{\color{blue}If, near origin, the mesh has to be optimized, it is considered that the mesh is growing, i.e.\
%		\begin{align}\label{meshcondition1}
%			\Delta x_i \leq \Delta x_{i+1},\ \ \ \forall i\in \{0,...,\mathrm{I}^h\}.
%		\end{align}
		while for the uniform mesh, i.e.\ $\Delta x_i=h\ \forall\, i$, one obtain that $x_{i-1/2}=ih$ and $\gamma_{i,j}=i-j$.
	
Further, for discretizing the time variable $t$, assuming $\Delta t$ being the time step and $[0,T]$ is the time domain, we have $N\Delta t=T$ for a large positive integer $N$. Define the time interval $$\tau_n :=[t_n,t_{n+1}[$$ having $t_n=n\Delta t,\,n\geq 0$.\enter
	
By having the above discretizations for volume variable $x$ and time $t$, let us begin with the finite volume scheme studying for the given equation. Consider the approximation of $c(t,x)$ as $c_i^n$ for $t\in \tau_n$ and $x\in \Lambda_i^h$  for all $i\in \{0,\ldots, \mathrm{I}^h \}$ and $n\in \{0,\ldots, N-1\}$. Further, for the kinetic parameters $K,B$, and $S$, for the time being, assuming the discretized form as $K(u,v)\approx K^h(u,v)=K_{j,i}$, $S(v)\approx S^h(v)= S_i$ and $B(u,v)\approx B^h(u,v)=  B_{j,i}$ for $v\in \Lambda_i^h$ and $u\in \Lambda_j^h$.\enter	
	
Now, integrating equation (\ref{main}) with respect to the variables $x$ and $t$ over a cell in space $\Lambda_i^h$ and time $\tau_n$, respectively leads to
	  \begin{align*}
	  	\int_{t_n}^{t_{n+1}}\int_{x_{i-1/2}}^{x_{i+1/2}} \frac{\partial
	  		(xc(t,x))}{\partial t}dx\,dt=-\int_{t_n}^{t_{n+1}} \int_{x_{i-1/2}}^{x_{i+1/2}} \frac{\partial
	  		\mathcal{C}_{nc}^R(c)(t, x)}{\partial x} dx\,dt+ \int_{t_n}^{t_{n+1}} \int_{x_{i-1/2}}^{x_{i+1/2}} \frac{\partial
	  		\mathcal{F}_{c}^R(c)(t, x)}{\partial x} dx\,dt.
	  \end{align*}
The above yields the following discretized form of the equation for the coagulation and multiple fragmentation processes
	  	\begin{align}\label{gun}
	  		\Delta x_i x_i (c_i^{n+1}- c_i^n) =- \Delta t\left(\mathcal{C}_{i+1/2}^n-\mathcal{C}_{i-1/2}^n\right)+\Delta t\left(\mathcal{F}_{i+1/2}^n-\mathcal{F}_{i-1/2}^n\right)
	  	\end{align}
where $\mathcal{C}_{i+1/2}^n$ and $\mathcal{F}_{i+1/2}^n$ are the numerical fluxes which are approximations of the continuous flux functions, respectively for $\mathcal{C}_{nc}^R(c)(x)$ and $\mathcal{F}_{c}^R(c)(x)$. Therefore, these are computed as
\begin{align}\label{numfluxaggre}
	  		\mathcal{C}_{nc}^R(c)(x_{i+1/2})=& \int_0^{x_{i+1/2}} \int_{x_{i+1/2}-u}^R u K(u,v)c(u) c(v) dv\, du \nonumber\\
	  		=& \sum_{j=0}^{i} \int_{\Lambda_j^h} uc(u) \sum_{K=\gamma_{i,j}}^{\mathrm{I}^h} \int_{\Lambda_k^h}  K(u,v)c(v) dv\, du \nonumber\\
	  		\approx& \sum_{j=0}^{i}\sum_{k=\gamma_{i,j}}^{\mathrm{I}^h}  x_j  K_{j,k} c_j^n c_k^n \Delta x_j \Delta x_k := \mathcal{C}_{i+1/2}^n.
\end{align}
Similarly, for the breakage, 	  		
	  		\begin{align}\label{numfluxbrk}
	  			\mathcal{F}_c^R(c)(x_{i+1/2})=& \int_0^{x_{i+1/2}} \int_{x_{i+1/2}}^R u B(u,v)S(v) c(v) dv\, du \nonumber\\
	  			=& \sum_{j=0}^{i} \int_{\Lambda_j^h}  \sum_{k=i+1}^{\mathrm{I}^h} \int_{\Lambda_k^h} u S(v)c(v) B(u,v) dv\, du \nonumber\\
	  			\approx& \sum_{j=0}^{i} \sum_{k=i+1}^{\mathrm{I}^h} x_j S_k B_{j,k} c_k^n \Delta x_k \Delta x_j := \mathcal{F}_{i+1/2}^n.	
	  	\end{align}
Also, the initial condition is approximated as
	  	$$c_i^{in}=\frac{1}{\triangle x_i}\int_{\Lambda_i^h}c^{in}(x)dx, \quad i\in \{0,\ldots, \mathrm{I}^h\}.$$
%Since $x_{-{1}/{2}}=0$ and $x_{\mathrm{I}^h+{1}/{2}}=R,$ it is noticed that at the boundaries $x_{-1/2}$ and $x_{\mathrm{I}^h+1/2}$, fluxes become zero, i.e.\
%	  	\begin{align}\label{chapter2:brkfluxcon}
%	  	\mathcal{C}_{-1/2}^n	=\mathcal{F}_{-1/2}^n=\mathcal{F}_{\mathrm{I}^h+1/2}^n=0.
%	  	\end{align}
Let us denote the characteristic function $\chi_D(x)$ of a set $D$ as $\chi_D(x)=1$ if $x\in D$ or $0$ everywhere else. Then a function $c^h$ on $[0,T]\times ]0,R]$ is defined as
	  	 \begin{align}\label{chap2:function_ch}
	  	 	c^h(t,x)=\sum_{n=0}^{N-1}\sum_{i=0}^{\mathrm{I}^h}c_i^n\,
	  	 	\chi_{\Lambda_i^h}(x)\,\chi_{\tau_n}(t),
	  	 \end{align}
	  	 which means that the function $c^h$ relies on the volume and time steps. Also noting that $$c^h(0,\cdot)=\sum_{i=0}^{\mathrm{I}^h}c_i^{in} \chi_{\Lambda_i^h}(\cdot),$$
	  	 converges strongly to $c^{in}$ in
	  	 $L^1]0,R]$ as $h\rightarrow 0$. Further, the following forms of aggregation, fragmentation and selection functions in discrete setting are taken
	  	 \begin{align}\label{chap2:function_aggregatediscrete}
	  	 	K^h(u,v)= \sum_{i=0}^{\mathrm{I}^h} \sum_{j=0}^{\mathrm{I}^h} K_{i,j} \chi_{\Lambda_i^h}(u) \chi_{\Lambda_j^h}(v)\quad \text{where}\quad K_{i,j}= \frac{1}{\Delta x_i \Delta x_j} \int_{\Lambda_j^h} \int_{\Lambda_i^h} K(u,v)du dv,
	  	 \end{align}
	  	 \begin{align}\label{chap2:function_brkdiscrete}
	  	 	B^h(u,v)= \sum_{i=0}^{\mathrm{I}^h} \sum_{j=0}^{\mathrm{I}^h} B_{i,j} \chi_{\Lambda_i^h}(u) \chi_{\Lambda_j^h}(v)\quad \text{where}\quad B_{i,j}= \frac{1}{\Delta x_i \Delta x_j} \int_{\Lambda_j^h} \int_{\Lambda_i^h} B(u,v)du dv,
	  	 \end{align}
	  	 and
	  	 \begin{align}\label{chap2:function_selectiondiscrete}
	  	 	S^h(v)= \sum_{i=0}^{\mathrm{I}^h} S_i \chi_{\Lambda_i^h}(v)\quad \text{where}\quad S_{i}= \frac{1}{\Delta x_i} \int_{\Lambda_i^h} S(v) dv.
	  	 \end{align}
Such discretization ensures that {$\|K^h-K\|_{L^1(]0,R]\times ]0,R])}\rightarrow 0$}, {$\|B^h-B\|_{L^1(]0,R]\times ]0,R])}\rightarrow 0$} and $\|S^h-S\|_{L^1(]0,R])}\rightarrow 0$ as $h\rightarrow 0$.
	  	
\section{Convergence of Solutions}\label{convergencechap2}
Below, the main findings of this work, i.e. the convergence of truncated solutions towards a weak solution of the continuous problem (\ref{main}) is discussed.
%	  	 In the following we state our main theorem for the convergence of approximated solutions towards a weak solution of the equation (\ref{main}).
	  	
	  \begin{thm}\label{maintheorem}	 Assume that $c^{in}\in X^+$. Let the coagulation kernel $K$, the fragmentation  function $B$ and the selection rate $S$ satisfy, ({\ref{aggregation funcn}}), ({\ref{breakage funcn}}), and ({\ref{selectionrate}}), respectively. Also assuming that under the time step $\Delta t$ and for a constant $\theta> 0$, the following stability condition\\
%	  	 consider that there exists a constant $\theta> 0$ such that the time step $\Delta t$ satisfies the stability condition as
	  		\begin{align}\label{22}
	  			C(R, T)\Delta t\le \theta< 1,
	  		\end{align}
	  		holds for
	  		\begin{align}{\label{23}}
	  			C(R, T):=\max(M,\alpha + 2)\, \max(\|K\|_{\infty},1)\|c^{in}\|_{L^1}\,e^{\eta \|S\|_{L^{\infty}} T} + R^{1+\alpha}\eta.
	  		\end{align}
	  	
	  	Then up to the extraction of a subsequence, $$c^h\rightarrow c\ \
	  	\text{in}\ \ L^\infty(0,T;L^1\,]0,R]),$$
	  	for c being the weak solution to (\ref{main}) on $[0,T]$ with initial
	  	condition $c^{in}$. This implies that, the function $c\geq 0$ satisfies
	  	\begin{equation}
	  	\begin{aligned}\label{240800}
	  {}	&	\int_0^T\int_0^R xc(t,x)\frac{\partial\varphi}{\partial
	  			t}(t,x)dx\,dt +\int_0^R xc^{in}(x)\varphi(0,x)dx \\
	  		& + \int_0^T \int_0^R [\mathcal{C}_{nc}^R(t,x)-\mathcal{F}_{c}^R(t,x)] \frac{\partial
	  			\varphi}{\partial x}(t,x)dx\,dt=\int_0^T \mathcal{C}_{nc}^R(t,R)\varphi(t,R) dt
	  	\end{aligned}
	  	\end{equation}
	  	for $\varphi$ being the continuously differentiable functions having compact support in $[0,T[\times [0,R].$
	  \end{thm}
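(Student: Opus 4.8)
The plan is to follow the by-now standard three-stage programme for finite volume convergence proofs (as in Bourgade--Filbet and Kumar--Kumar), adapted to accommodate the singular breakage kernel: first extract uniform a priori bounds on the family $(c^h)$, then promote these to weak $L^1$ compactness via Dunford--Pettis, and finally pass to the limit in a discrete weak formulation to recover (\ref{240800}).

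First I would establish that the scheme preserves non-negativity. Rewriting (\ref{gun}) as an explicit update for $c_i^{n+1}$ and isolating the diagonal contribution, the stability condition (\ref{22})--(\ref{23}) is precisely calibrated so that the coefficient multiplying $c_i^n$ stays in $[0,1]$ while every remaining term is a non-negative combination of the $c_j^n$; an induction on $n$ starting from $c_i^{in}\ge 0$ then yields $c_i^n\ge 0$ for all $i,n$. Summing (\ref{gun}) over $i$ and telescoping the fluxes gives the discrete mass balance: the coagulation flux only removes mass through $\mathcal{C}_{i+1/2}^n$ at the right boundary while the conservative breakage flux telescopes to zero, so $\int_0^R x c^h(t,x)\,dx$ is bounded by its initial value. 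A parallel estimate on $\int_0^R c^h(t,x)\,dx$, using that breakage increases particle number at a rate controlled by $\eta\|S\|_{L^\infty}$, produces via a discrete Gronwall inequality the bound $\|c^h(t)\|_{L^1}\le \|c^{in}\|_{L^1}e^{\eta\|S\|_{L^\infty}t}$, which is exactly the growth reflected in $C(R,T)$.

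The central step is weak $L^1$ compactness. Since $c^{in}\in X^+$, the refined De La Vall\'ee--Poussin theorem furnishes a non-negative convex function $\Phi$ with $\Phi(r)/r\to\infty$ and $\int_0^R\Phi(c^{in})\,dx<\infty$. I would propagate this superlinear bound to $c^h(t)$ uniformly in $h$ and $t$ by testing the scheme against $\Phi'(c_i^n)$ and controlling the fragmentation contribution; here the singular breakage kernel enters, but the flux identity $\int_0^x uB(u,v)\,du\cdot S(v)=x^{\alpha+2}$ shows that the singularities of $B$ and of $S$ compensate, so the fragmentation flux $\mathcal{F}_c^R$ is in fact regular and the bound survives. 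Uniform integrability together with the $L^1$ bound yields, by Dunford--Pettis, relative weak compactness of $(c^h(t))$; combining this with a time-equicontinuity estimate obtained by bounding discrete time increments of $c^h$ against the fluxes gives, after extracting a subsequence, convergence $c^h\to c$ in a sense strong enough to pass to the limit. I expect the handling of the singularity near $x=0$ in the equi-integrability estimate to be the main obstacle, since pointwise bounds on $B$ are unavailable and one must rely entirely on the cancellation above and on the $L^1$ convergences $B^h\to B$, $S^h\to S$.

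Finally I would recover the weak formulation. Multiplying (\ref{gun}) by the cell value of a test function $\varphi\in C^1_c([0,T[\times[0,R])$, summing over $i$ and $n$, and performing discrete summation by parts in $t$ (to produce the $\partial_t\varphi$ and initial-data terms) and in $x$ (to produce the flux--$\partial_x\varphi$ term and the boundary term $\mathcal{C}_{nc}^R(t,R)\varphi(t,R)$), I arrive at a discrete analogue of (\ref{240800}). Passing to the limit then requires: consistency of the numerical fluxes $\mathcal{C}_{i+1/2}^n,\mathcal{F}_{i+1/2}^n$ with $\mathcal{C}_{nc}^R,\mathcal{F}_c^R$, which follows from the $L^1$ kernel convergences together with the uniform bounds; and convergence of the quadratic coagulation term, handled by writing it as a bilinear form in $c^h$ and using the weak $L^1$ convergence of $c^h$ against the locally bounded kernel $K$ to identify the limit as the product $c(t,u)c(t,v)$. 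Collecting the limits term by term shows that the limit $c\ge 0$ satisfies (\ref{240800}), completing the proof.
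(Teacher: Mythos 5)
Your overall architecture (non-negativity and $L^1$ bounds by induction and discrete Gronwall, De La Vall\'ee-Poussin plus Dunford--Pettis for weak $L^1$ compactness, discrete summation by parts and flux consistency to recover the weak form) is exactly the paper's programme. But there is a genuine gap at the step you yourself flag as the main obstacle: the equi-integrability estimate in the presence of the singular breakage kernel. The cancellation you invoke, namely that $\int_0^x uB(u,v)\,du\,S(v)=x^{\alpha+2}$ is regular, lives at the level of the \emph{mass-weighted flux} $\mathcal{F}_c^R$, and it is indeed what makes the flux convergence lemma work (the paper uses $X^h(u)B^h(u,v)S^h(v)=(\alpha+2)x^{1+\alpha}\in L^\infty$ there). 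It does \emph{not} help when you test the scheme against $\Phi'(c_i^{n+1})$: after dividing the flux difference by $x_i$, the fragmentation gain term in the number-density equation carries the factor $S_kB_{i,k}=(\alpha+2)x_i^{\alpha}$ with no compensating mass weight, so you are forced to control sums of the form $\sum_i \Delta x_i\, x_i^{\alpha}\,\Phi'(c_i^{n+1})$. The convexity inequality $x\Phi'(y)\le\Phi(x)+\Phi(y)$ converts this into a bound on $\sum_i \Delta x_i\,\Phi(x_i^{\alpha})$, and for a generic De La Vall\'ee-Poussin function $\Phi$ this quantity can diverge as $x_i\to 0$. The paper's resolution, which your proposal lacks, is to take $\Phi$ in the restricted class ${C}_{VP,\infty}$ satisfying the polynomial growth condition (\ref{Tproperty}), $T_\lambda(\Phi)=\sup_p \Phi(p)/p^{\lambda}<\infty$ for some $\lambda\in(1,2)$ with $\lambda\alpha+1>0$; then $\Phi(x_i^{\alpha})\le T_\lambda(\Phi)\,x_i^{\lambda\alpha}$ is integrable near the origin and the estimate closes (this is precisely estimate (\ref{43})). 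Without this refined choice of $\Phi$, the bound does not ``survive'' as you claim.

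A secondary imprecision: passing to the limit in the quadratic coagulation term cannot be done by ``using the weak $L^1$ convergence of $c^h$'' against the kernel alone, since a product of two weakly converging sequences need not converge to the product of the limits. The paper handles this by two successive applications of the weak--strong convergence lemma (Lemma \ref{Wconverge}): first integrate in $u$, where the kernel factors converge a.e.\ and are uniformly bounded, obtaining pointwise convergence of the inner integral for a.e.\ $v$; then apply the lemma again in $v$ with $c^h(t,\cdot)\rightharpoonup c(t,\cdot)$. You cite the right references, so this machinery is presumably what you intend, but as written the identification of the limit of the bilinear term is not justified.
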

	  By following the above theorem, it is certain that the main task here is to prove the weak convergence of the family of functions $(c^h)_{h \in (0, 1)}$ to a
	  function $ c $ in $L^1]0,R]$ as $h,\Delta t \rightarrow 0$. The idea is to use a necessary and sufficient condition for compactness with respect to the weak convergence in $L^1$ given by the following Dunford-Pettis theorem.	  	
	  	\begin{thm}
	  		Let us take $|\Omega|<\infty$ and $c^h:\Omega\mapsto \mathbb{R}$ be a sequence in $L^1(\Omega).$
	  		Assume that the sequence $\{c^h\}$ follows:
	  		\begin{itemize}
	  			\item $\{c^h\}$ is equibounded in $L^1(\Omega)$, i.e.\
	  			\begin{align}\label{equiboundedness}
	  				\sup \|c^h\|_{L^1(\Omega)}< \infty
	  			\end{align}
	  			\item $\{c^h\}$ is equiintegrable, iff
	  			\begin{align}\label{equiintegrable}
	  				\int_\Omega \Phi(|c^h|)dx< \infty
	  			\end{align}
	  			for  $\Phi$ being some increasing function taken as $\Phi:[0,\infty[\mapsto [0,\infty[$
	  			such that
	  			\begin{align*}
	  				\lim_{r\rightarrow \infty}\frac{\Phi(r)}{r}\rightarrow
	  				\infty.
	  			\end{align*}
	  		\end{itemize}
	  		Then $c^h$ belongs to a weakly compact set in $L^1(\Omega)$, implying that there exists a subsequence of $c^h$ that converges weakly in $L^1(\Omega)$.
	  	\end{thm}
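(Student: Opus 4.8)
The statement is the Dunford--Pettis theorem, phrased together with the sufficiency direction of the De La Vall\'ee--Poussin criterion: the hypothesis on $\Phi$ is precisely what converts an abstract superlinear bound into uniform integrability. Reading the equiintegrability hypothesis, as is standard, as a uniform bound $C := \sup_h \int_\Omega \Phi(|c^h|)\,dx < \infty$, the plan splits into two stages. First I would show that this $\Phi$-bound forces $\{c^h\}$ to be uniformly integrable. Second, I would prove the genuine compactness assertion, namely that an $L^1$-bounded, uniformly integrable sequence on a finite measure space admits a weakly convergent subsequence; for this I would avoid heavy machinery and use a truncation-plus-diagonal argument anchored on weak sequential compactness in $L^2(\Omega)$.

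For the first stage, set $\omega(M) := \sup_h \int_{\{|c^h|>M\}} |c^h|\,dx$. Given $\varepsilon > 0$, the superlinearity $\Phi(r)/r \to \infty$ yields an $M$ with $\Phi(r) \ge (C/\varepsilon)\,r$ for all $r > M$; hence on $\{|c^h|>M\}$ one has $|c^h| \le (\varepsilon/C)\,\Phi(|c^h|)$ and therefore $\int_{\{|c^h|>M\}}|c^h|\,dx \le (\varepsilon/C)\,C = \varepsilon$ uniformly in $h$. Thus $\omega(M) \to 0$ as $M \to \infty$, which is exactly uniform integrability. Only this one direction of De La Vall\'ee--Poussin is needed.

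For the second stage, fix $M \in \mathbb{N}$ and truncate by $c^h_{(M)} := c^h\,\chi_{\{|c^h|\le M\}}$, so that $\|c^h_{(M)}\|_{L^\infty} \le M$ and, since $|\Omega|<\infty$, the family $\{c^h_{(M)}\}_h$ is bounded in $L^2(\Omega)$. Because $L^2(\Omega)$ is a separable Hilbert space, a diagonal extraction over $M=1,2,\dots$ produces a \emph{single} subsequence $(c^{h_k})_k$ such that $c^{h_k}_{(M)} \rightharpoonup g_M$ weakly in $L^2(\Omega)$, and hence weakly in $L^1(\Omega)$ (on a finite measure space $L^\infty \subset L^2$, so the weak-$L^2$ limit tests correctly against $L^\infty$), for every $M$. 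Uniform integrability couples these limits: for $M<M'$ one has $\|c^{h}_{(M)}-c^{h}_{(M')}\|_{L^1} \le \omega(M)$ for all $h$, and weak lower semicontinuity of the $L^1$-norm then gives $\|g_M-g_{M'}\|_{L^1}\le \omega(M)$. Hence $(g_M)_M$ is Cauchy in $L^1(\Omega)$ and converges to some $c\in L^1(\Omega)$.

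It remains to verify $c^{h_k}\rightharpoonup c$ weakly in $L^1(\Omega)$, i.e.\ $\int_\Omega c^{h_k} g\,dx \to \int_\Omega c\, g\,dx$ for every $g\in L^\infty(\Omega)$. I would split
\[
\Big|\int_\Omega (c^{h_k}-c)\,g\,dx\Big| \le \|g\|_\infty\,\omega(M) + \Big|\int_\Omega (c^{h_k}_{(M)}-g_M)\,g\,dx\Big| + \|g\|_\infty\,\|g_M-c\|_{L^1},
\]
choose $M$ large so the first and third terms are small (using $\omega(M)\to 0$ and $g_M\to c$), and then send $k\to\infty$ to kill the middle term by weak $L^2$ convergence; a standard $\varepsilon/3$ argument closes the claim. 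The main obstacle is this second stage, and within it the coherence of the limiting procedures: one must extract a single subsequence that works simultaneously for all truncation levels and then show that the $L^2$-weak limits $g_M$ assemble into a genuine $L^1$-weak limit of the untruncated sequence. It is precisely the uniform integrability from Stage one that renders the truncation error $\omega(M)$ negligible and legitimizes the interchange of the two limits $k\to\infty$ and $M\to\infty$. An alternative, less self-contained route would invoke the Eberlein--\v{S}mulian theorem to reduce to sequential compactness and then construct the limiting density via Vitali--Hahn--Saks and Radon--Nikodym, but the truncation argument above keeps the proof elementary.
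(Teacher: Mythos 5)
Your proof is correct, but there is nothing in the paper to match it against: the paper states this result as the classical Dunford--Pettis theorem (folded together with the sufficiency half of the De La Vall\'ee--Poussin criterion) and uses it as a black box, offering no proof and immediately turning to verifying its hypotheses for the family $(c^h)$. Your Stage one is exactly the standard De La Vall\'ee--Poussin sufficiency computation, and it is carried out correctly; your Stage two replaces the usual abstract route (weak compactness via Eberlein--\v{S}mulian, or a construction of the limit through Vitali--Hahn--Saks and Radon--Nikodym) with an elementary truncation-plus-diagonal argument through weak sequential compactness of bounded sets in $L^2(\Omega)$, and the key coupling estimate $\|g_M - g_{M'}\|_{L^1} \le \omega(M)$, obtained from weak lower semicontinuity of the $L^1$ norm, is precisely what legitimizes interchanging the limits $k\to\infty$ and $M\to\infty$; this yields the sequential conclusion directly, which is what the paper actually uses. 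Two minor remarks. First, you are right to read the equiintegrability hypothesis as a \emph{uniform} bound $\sup_h \int_\Omega \Phi(|c^h|)\,dx < \infty$; as literally written in the paper (finiteness for each $h$ separately) the condition would be vacuous, and in fact with the uniform reading the equiboundedness hypothesis becomes redundant on a finite measure space, since superlinearity of $\Phi$ gives $r \le a + b\,\Phi(r)$ and hence an $L^1$ bound. Second, separability of $L^2(\Omega)$ is not needed for your diagonal extraction: a bounded sequence in any Hilbert space has a weakly convergent subsequence (restrict to the closed span of the sequence), so that step goes through without qualification.
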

	  	
	  	Hence, to prove the theorem \ref{maintheorem}, it is enough to establish the equiboundedness and the equiintegrability of the family $c^h$ in $L^1$ as given in (\ref{equiboundedness}) and (\ref{equiintegrable}), respectively. Let us begin with the proof of non-negativity and the equiboundedness of the function $c^h$ in the following proposition. In order to proceed this, let us denote $X^h(x)$ as a mid-point approximation of a point $x$, i.e.\ $X^h(x)=x_i$
	  	for $x\in \Lambda_i^h.$
\begin{propos}
{ Let us consider that the stability condition (\ref{22}) holds for the time step $\Delta t$. Also, assuming that the growth conditions on kernels satisfy (\ref{aggregation funcn})-(\ref{breakage funcn}). Then $c^h$ is a non-negative function such that
\begin{align}
\int_0^R X^h(x)c^h(t,x)dx \leq \int_0^R X^h(x)c^h(s,x)dx \leq \int_0^R X^h(x)c^h(0,x)dx=:\mu_1^{in},
\end{align}
where $ 0\le s\le t\le T.$ Then the following estimates obtained
\begin{align}\label{36}
\int_0^R c^h(t,x)dx \le  \|c^{in}\|_{L^1}\,e^{\eta \|S\|_{L^{\infty}} t}.
\end{align}
}
\end{propos}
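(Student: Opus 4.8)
The plan is to proceed by induction on the time index $n$, establishing the non-negativity of $(c_i^n)_i$, the decay of the first moment, and the $L^1$ bound (\ref{36}) \emph{simultaneously}, because the bound on the loss rates needed to prove non-negativity depends on the $L^1$ estimate one is trying to prove; the two facts are therefore coupled and must be carried together through the induction. The base case $n=0$ is immediate: $c_i^{in}=\tfrac{1}{\Delta x_i}\int_{\Lambda_i^h}c^{in}\,dx\ge 0$ since $c^{in}\ge 0$, and $\sum_i c_i^{in}\Delta x_i=\int_0^R c^{in}\,dx\le\|c^{in}\|_{L^1}$. For the inductive step I would first rewrite the scheme (\ref{gun}) as the explicit update
\[
c_i^{n+1}=c_i^n-\frac{\Delta t}{\Delta x_i\, x_i}\bigl[(\mathcal{C}_{i+1/2}^n-\mathcal{C}_{i-1/2}^n)-(\mathcal{F}_{i+1/2}^n-\mathcal{F}_{i-1/2}^n)\bigr],
\]
and, using the explicit expressions (\ref{numfluxaggre})--(\ref{numfluxbrk}), collect all the terms proportional to $c_i^n$.

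Assuming $c_j^n\ge 0$ for every $j$, each coagulation flux difference splits into a loss into/out of cell $i$ (involving $c_i^n\,c_k^n$) and a gain built from products $c_j^n c_k^n$ with $j,k<i$, and each fragmentation flux difference splits into the loss of the breaking cell-$i$ particle (proportional to $c_i^n$) and a non-negative fragment-production term from larger cells $k>i$. All gain contributions are sums of products of non-negative quantities, hence non-negative, so the update takes the form $c_i^{n+1}=c_i^n\,(1-\Delta t\,A_i^n)+\Delta t\,(\text{non-negative})$. The coagulation part of the loss rate $A_i^n$ is controlled by $\|K\|_\infty\sum_k c_k^n\Delta x_k$, and the fragmentation part is controlled using the (discrete analogue of the) mass identity $\int_0^{x_i}uB(u,x_i)\,du=x_i$, which cancels the factor $1/x_i$ and leaves a term bounded by $S_i\le R^{1+\alpha}$. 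Combining these with the inductive $L^1$ bound yields $A_i^n\le C(R,T)$ with $C(R,T)$ as in (\ref{23}), and the stability condition (\ref{22}) then forces $1-\Delta t\,A_i^n\ge 1-\theta>0$, whence $c_i^{n+1}\ge 0$.

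For the first-moment decay I would sum (\ref{gun}) over $i\in\{0,\dots,\mathrm{I}^h\}$; the flux differences telescope, and since the (conservative) fragmentation flux vanishes at both $x=0$ and $x=R$ while the coagulation flux vanishes at $x=0$, one is left with $\sum_i\Delta x_i\,x_i(c_i^{n+1}-c_i^n)=-\Delta t\,\mathcal{C}_{\mathrm{I}^h+1/2}^n\le 0$, the right-hand side being non-positive by the non-negativity just established. Recognising $\sum_i x_i c_i^n\Delta x_i=\int_0^R X^h(x)c^h(t_n,x)\,dx$ gives the monotonicity chain for $0\le s\le t\le T$. For (\ref{36}) I would instead divide (\ref{gun}) by $x_i$ and sum over $i$: coagulation strictly reduces the particle number, so its contribution is non-positive and may be discarded, while a break produces at most $\eta$ fragments, so the fragmentation contribution is bounded by $\eta\|S\|_{L^\infty}\sum_i c_i^n\Delta x_i$. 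This yields the recursion $\sum_i c_i^{n+1}\Delta x_i\le(1+\Delta t\,\eta\|S\|_{L^\infty})\sum_i c_i^n\Delta x_i$, and iterating together with $(1+a)^n\le e^{na}$ produces the claimed exponential bound.

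The main obstacle is the non-negativity step: one must decompose the coagulation and fragmentation flux differences carefully enough to extract exactly the coefficient of $c_i^n$ and to verify that what remains is genuinely non-negative, and then bound the fragmentation loss despite the singularity $B(x,y)\sim x^\alpha$ near the origin. The singularity is tamed because the relevant weighted sums approximate $\int_0^{x_i}u^{1+\alpha}\,du$, which is finite for $\alpha\in(-1,0]$; the delicate part is checking that the discrete counterpart of $\int_0^y uB(u,y)\,du=y$ holds (or is suitably bounded) on the chosen mesh, so that the factor $1/x_i$ is absorbed and the resulting mesh-independent constant collapses into $C(R,T)$.
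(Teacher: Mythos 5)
Your plan reproduces the paper's proof almost step for step: the same coupled induction (non-negativity together with the $L^1$ bound, since the loss-rate estimate needs the inductive $L^1$ hypothesis), the same extraction of the coefficient of $c_i^n$ from the flux differences with all gain terms discarded as non-negative, the same telescoping of (\ref{gun}) leaving $-\Delta t\,\mathcal{C}_{\mathrm{I}^h+1/2}^n\le 0$ for the first-moment monotonicity, and the same division by $x_i$ plus the ``at most $\eta$ fragments per breakup'' bound and $1+a\le e^a$ to get (\ref{36}). All of that is correct and is exactly what the paper does.

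The one point where you deviate is also where your argument fails: the bound on the fragmentation loss rate. After dividing by $\Delta x_i\,x_i$, that loss coefficient is $\tfrac{S_i}{x_i}\sum_{j<i}x_jB_{j,i}\Delta x_j$, and you propose to control it by a \emph{discrete} mass identity $\sum_{j<i}x_jB_{j,i}\Delta x_j\le x_i$, leaving the bound $S_i\le R^{1+\alpha}$. This inequality is false for the singular kernel (\ref{breakage funcn}): $B_{j,i}$ is a cell average while $x_j$ is a midpoint, and since $u\mapsto B(u,v)\propto u^{\alpha}$ is decreasing, the midpoint-weighted sum \emph{over}-estimates $\int u\,B(u,v)\,du$; on the cell touching the origin the over-estimation factor is exactly $\bigl(\tfrac{h}{2}\int_0^h B(u,v)\,du\bigr)\big/\bigl(\int_0^h uB(u,v)\,du\bigr)=\tfrac{\alpha+2}{2(\alpha+1)}=\eta/2$, which blows up as $\alpha\to-1^+$. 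Concretely, on a uniform mesh with $\alpha=-0.9$ one computes $x_0B_{0,1}\Delta x_0\approx 5.3\,h$ while $x_1=1.5\,h$, so the loss coefficient for $i=1$ is about $3.5\,S_1$, not $\le S_1$: the singularity is precisely \emph{not} tamed at the discrete level by the mass identity. The paper avoids this with the cruder but correct estimate $x_j\le x_i$ for $j<i$, followed by the fragment-\emph{number} identity $\sum_{j<i}B_{j,i}\Delta x_j\le\int_0^{x_i}B(u,x_i)\,du=\eta$, which gives the loss bound $S_i\,\eta\le R^{1+\alpha}\eta$ --- exactly the term appearing in the stability constant (\ref{23}); note that (\ref{23}) carries the factor $\eta$ because the factor genuinely cannot be dropped. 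Replace your mass-identity step by this argument (or carry the factor $\eta$, respectively a mesh-dependent factor, explicitly) and the rest of your proposal goes through unchanged.
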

	  		
\begin{proof}
The non-negativity and the equiboundedness of the function $c^h$ are shown here by using induction. It is known  that at $t=0$, $c^h(0)\geq 0$ and belongs to $L^1]0,R].$ Assuming further that the function $c^h(t^n)\ge 0$ and
\begin{align}\label{mon1}
\int_0^R c^h(t^n,x)dx \le  \|c^{in}\|_{L^1}\,e^{\eta \|S\|_{L^{\infty}} t^n}.
\end{align}
	  				
Then, our first aim is to prove that $c^h(t^{n+1})\geq 0.$ Firstly, consider the cell at the boundary having index $i=0$.
By (\ref{numfluxaggre}) and (\ref{numfluxbrk}), we have $\mathcal{C}_{i\pm1/2}^n\geq 0$, $\mathcal{F}_{i\pm1/2}^n\geq 0$. Therefore, in this case, from the equation (\ref{gun}) and by using the fluxes at boundaries, one obtains
	  		
\begin{align*}
x_0c_0^{n+1}= & x_0c_0^n-\frac{\Delta t}{\Delta x_0}\mathcal{C}_{1/2}^n+\frac{\Delta t}{\Delta x_0}\mathcal{F}_{1/2}^n\\
\ge &  x_0c_0^n-\frac{\Delta t}{\Delta x_0}\mathcal{C}_{1/2}^n\\
\ge & \bigg(1-\Delta t\sum_{k=0}^{\mathrm{I}^h}\Delta x_kK_{0,k}c_k^n \bigg)x_0c_0^n.
	  		\end{align*}	
 Now, having the stability condition on the time step $\Delta t$ and from equation (\ref{mon1}), the non-negativity of $c_0^{n+1}$ follows. For $i \geq 1$, one has	

 	\begin{align*}
 		 x_i c_i^{n+1}= x_i c_i^n -\frac{ \Delta t}{\Delta x_i}\left(\mathcal{C}_{i+1/2}^n-\mathcal{C}_{i-1/2}^n\right)+\frac{\Delta t}{\Delta x_i}\left(\mathcal{F}_{i+1/2}^n-\mathcal{F}_{i-1/2}^n\right).
 	\end{align*}
 	
 	Following the equations (\ref{numfluxaggre}), (\ref{numfluxbrk}) and the non-negativity of $c^h(t^n)$, it implies that
 	\begin{align}
 		-\frac{\mathcal{C}_{i+1/2}^n-\mathcal{C}_{i-1/2}^n}{\Delta
 			x_i}=& \frac{1}{\Delta x_i}\bigg[-\sum_{j=0}^{i}\sum_{k=\gamma_{i,j}}^{\mathrm{I}^h}  x_j  K_{j,k} c_j^n c_k^n \Delta x_j \Delta x_k+\sum_{j=0}^{i-1}\sum_{k=\gamma_{i-1,j}}^{\mathrm{I}^h}  x_j  K_{j,k} c_j^n c_k^n \Delta x_j \Delta x_k\bigg] \nonumber\\ =& \frac{1}{\Delta x_i}\bigg[-\sum_{k=\gamma_{i,i}}^{\mathrm{I}^h} x_i  K_{i,k} c_i^n c_k^n \Delta x_i \Delta x_k+\sum_{j=0}^{i-1}\sum_{k=\gamma_{i-1,j}}^{\gamma_{i,j}-1}  x_j  K_{j,k} c_j^n c_k^n \Delta x_j \Delta x_k\bigg] \nonumber\\ \geq& -\sum_{k=0}^{\mathrm{I}^h}  \Delta x_k K_{i,k}x_i c_k^n  c_i^n,
 	\end{align}
 	and
 	\begin{align}\label{mon}
 		\frac{\mathcal{F}_{i+1/2}^n-\mathcal{F}_{i-1/2}^n}{\Delta
 			x_i}=& \frac{1}{\Delta x_i}\bigg[\sum_{j=0}^{i} \sum_{k=i+1}^{\mathrm{I}^h} x_j S_k B_{j,k} c_k^n \Delta x_j \Delta x_k- \sum_{j=0}^{i-1} \sum_{k=i}^{\mathrm{I}^h} x_j S_k B_{j,k} c_k^n \Delta x_j \Delta x_k\bigg] \nonumber\\ =& \frac{1}{\Delta x_i}\bigg[-\sum_{j=0}^{i-1} x_j S_i B_{j,i} c_i^n \Delta x_i \Delta x_j+ \sum_{k=i+1}^{\mathrm{I}^h} x_i S_k B_{i,k} c_k^n \Delta x_k \Delta x_i\bigg] \nonumber\\ \geq& -\sum_{j=0}^{i-1} S_i B_{j,i} \Delta x_j x_i c_i^n, \hspace{0.4cm} \mbox{as} ~\, x_j< x_i ~ \mbox{for}~  j< i.
 	\end{align}
By using the above bounds and then the assumptions taken in expressions (\ref{aggregation funcn}), (\ref{breakage funcn}) and (\ref{selectionrate}) lead to
 	
 	\begin{align*}
 		x_i c_i^{n+1} \geq & \bigg(1-\Delta t\bigg(\sum_{k=0}^{\mathrm{I}^h}  \Delta x_k K_{i,k} c_k^n +\sum_{k=0}^{\mathrm{I}^h} S_i B_{k,i} \Delta x_k\bigg) \bigg)x_i c_i^n\\
 		\geq& \bigg(1-\Delta t\bigg(\|K\|_{\infty}\sum_{k=0}^{\mathrm{I}^h}  \Delta x_k c_k^n +\eta R^{1+\alpha}\bigg) \bigg)x_i c_i^n, \quad \text{for}\quad \eta=\frac{\alpha+2}{\alpha+1}.
 	\end{align*}
 Hence, following the stability condition (\ref{22}) on $\Delta t$ and the $L^1$ bound (\ref{mon1}) give
  	$$c^h(t^{n+1})\geq 0.$$
 Further, by summing (\ref{gun}) over $i$ and using the fluxes at boundaries as assumed in (\ref{chapter2:brkfluxcon}), the following time monotonicity result is obtained for the total mass from the non-negativity of $c^h$ as
  	
  	\begin{align*}
  	\sum_{i=0}^{\mathrm{I}^h}\Delta {x_i} x_i c_i^{n+1} = \sum_{i=0}^{\mathrm{I}^h}\Delta x_i x_i c_i^{n}-\Delta t\, \mathcal{C}_{\mathrm{I}^h+1/2}^n \leq \sum_{i=0}^{\mathrm{I}^h}\Delta {x_i} x_i c_i^{n}.
  	\end{align*}
  		Next, it is shown that $c^h(t^{n+1})$ follows a similar estimate as
  		(\ref{mon1}). For this, multiply equation (\ref{gun}) by the term $\Delta x_i/x_i$ and using summation with respect to $i$, provide
  		
  		\begin{align}\label{gun1}
  		\sum_{i=0}^{\mathrm{I}^h} \Delta	{x_i} c_i^{n+1}= \sum_{i=0}^{\mathrm{I}^h} \Delta	{x_i} c_i^n -\Delta t \sum_{i=0}^{\mathrm{I}^h}{\frac{\left(\mathcal{C}_{i+1/2}^n-\mathcal{C}_{i-1/2}^n\right)}{x_i}}+\Delta t \sum_{i=0}^{\mathrm{I}^h}{\frac{\left(\mathcal{F}_{i+1/2}^n-\mathcal{F}_{i-1/2}^n\right)}{x_i}}.
  		\end{align}
  		The second term on the right hand side of the above equation can be simplified as
  		\begin{align}\label{aggregationcondition}
  			 -
  			 \sum_{i=0}^{\mathrm{I}^h}{\frac{\left(\mathcal{C}_{i+1/2}^n-\mathcal{C}_{i-1/2}^n\right)}{x_i}} \leq - \sum_{i=0}^{\mathrm{I}^h}{\mathcal{C}_{i+1/2}^n \bigg(\frac{1}{x_i}-\frac{1}{x_{i+1}}\bigg)} \leq 0,~ ~\mbox{due}~ \mbox{to}~ \mathcal{C}_{i+1/2}^n \geq 0 ~\forall \,i.
  			\end{align}
  			Now, for the fragmentation term, we have  			
  			\begin{align*}
  				\sum_{i=0}^{\mathrm{I}^h}\frac{\mathcal{F}_{i+1/2}^n-\mathcal{F}_{i-1/2}^n}{x_i}\leq
  				\sum_{i=0}^{\mathrm{I}^h}\sum_{k=i+1}^{\mathrm{I}^h} \Delta x_i \Delta x_k S_k B_{i,k} c_k^n.
  			\end{align*}
  			
  			By changing the order of summation and then using the conditions (\ref{selectionrate}) and (\ref{breakage funcn}) on selection rate and fragmentation function, it is easy to see that
  				
  				\begin{align}\label{breakagecondition}
  					\sum_{i=0}^{\mathrm{I}^h}\frac{\mathcal{F}_{i+1/2}^n-\mathcal{F}_{i-1/2}^n}{x_i}\le &
  					\sum_{k=0}^{\mathrm{I}^h}\Delta x_k  c_k^n \sum_{i=0}^{k-1}\Delta x_i S_k B_{i,k}\nonumber\\
  					\le & \|S\|_{L^{\infty}} \sum_{k=0}^{\mathrm{I}^h}\Delta x_k  c_k^n \int_{0}^{x_k} B(x,x_k) dx \leq \|S\|_{L^{\infty}} \frac{\alpha+2}{\alpha+1} \sum_{k=0}^{\mathrm{I}^h}\Delta x_k  c_k^n.
  				\end{align}
  	By using (\ref{aggregationcondition}) and (\ref{breakagecondition}) into (\ref{gun1}), we find that for $\eta= \frac{\alpha+2}{\alpha+1}$
  		\begin{align*}
  			\sum_{i=0}^{\mathrm{I}^h}\Delta x_i c_i^{n+1} \le    (1+\eta \|S\|_{L^{\infty}} \Delta t) \sum_{i=0}^{\mathrm{I}^h}\Delta x_i c_i^n.
  		\end{align*}
  		Finally, having (\ref{mon1}) at step $n$ and the relation $1+x < \exp(x)$ for all $x>0$ provides
  		\begin{align*}
  			\sum_{i=0}^{\mathrm{I}^h}\Delta x_i c_i^{n+1} \le  (1+\eta \|S\|_{L^{\infty}} \Delta t) \,{\|c^{in}\|}_{L^1}\,e^{\eta \|S\|_{L^{\infty}} t^n}
  			\leq {\|c^{in}\|}_{L^1} \,e^{\eta \|S\|_{L^{\infty}}  t^{n+1}},
  		\end{align*}
  		and therefore the result (\ref{36}) follows.
 	 \end{proof}
	
	   In order to prove uniform integrability of the family of solutions, let us denote a particular class of convex functions  as ${C}_{VP, \infty}$. Further, consider $\Phi \in {C}^{\infty}([0, \infty))$, a non-negative and convex function which belongs to the class ${C}_{VP, \infty}$ and enjoys the following properties:
	   \begin{description}
	   	\item[(i)] $\Phi(0)=0,\ \Phi'(0)=1$ and $\Phi'$ is concave;
	   	\item[(ii)] $\lim_{p \to \infty} \Phi'(p) =\lim_{p \to \infty} \frac{ \Phi(p)}{p}=\infty$;
	   	\item[(iii)] for some $\lambda \in (1, 2)$,
	   	\begin{align}\label{Tproperty}
	   		T_{\lambda}(\Phi):= \sup_{p \ge 0} \bigg\{   \frac{ \Phi(p)}{p^{\lambda}} \bigg\} < \infty.
	   	\end{align}
	   \end{description}
Example of such a ${C}_{VP, \infty}$ function is $\Phi(p) =2(1+p) \ln(1+p) -p$. Since, $c^{in}\in L^1\,]0,R]$, therefore, by the La Vall\'{e}e Poussin theorem, see (\cite{Laurencot:2015}, Theorem 2.8), a continuously differentiable convex function
	   $\mathrm{\Phi}\geq 0$ exists in $\mathbb{R}_{>0}$ with properties $\mathrm{\Phi}(0)=0$, $\mathrm{\Phi}'(0)=1$
	   such that $\mathrm{\Phi}'$ is concave,
	   $$\frac{\mathrm{\Phi}(p)}{p} \rightarrow \infty,\ \ \text{as}\ \
	   p \rightarrow \infty$$ and
	   \begin{align}\label{convex}
	   	\mathcal{I}:=\int_0^R \mathrm{\Phi}(c^{in})(x)dx< \infty.
	   \end{align}
	
	   \begin{lem} [\cite{laurenccot2002continuous}, Lemma B.1.]
	   	Let $\mathrm{\Phi}\in {C}_{VP, \infty}$. Then the estimates
	   	$$x\mathrm{\Phi}'(y)\leq \mathrm{\Phi}(x)+\mathrm{\Phi}(y)$$ holds $\forall\ (x,y)\in \mathbb{R}_{>0}\times \mathbb{R}_{>0}$.
	   	\end{lem}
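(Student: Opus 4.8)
The plan is to combine the tangent-line characterization of convexity with an auxiliary one-variable estimate that exploits the specific structure of the class ${C}_{VP,\infty}$, namely the concavity of $\mathrm{\Phi}'$ together with the normalization $\mathrm{\Phi}'(0)=1$. Throughout I fix an arbitrary pair $(x,y)\in\mathbb{R}_{>0}\times\mathbb{R}_{>0}$.

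First I would use that $\mathrm{\Phi}$ is convex and differentiable, so its graph lies above each of its tangent lines:
\begin{align*}
\mathrm{\Phi}(x)\ge \mathrm{\Phi}(y)+\mathrm{\Phi}'(y)(x-y).
\end{align*}
Rearranging gives $x\,\mathrm{\Phi}'(y)\le \mathrm{\Phi}(x)-\mathrm{\Phi}(y)+y\,\mathrm{\Phi}'(y)$. Thus the whole problem reduces to controlling the quantity $y\,\mathrm{\Phi}'(y)$ from above by $2\,\mathrm{\Phi}(y)$: once this is available, the right-hand side becomes $\mathrm{\Phi}(x)-\mathrm{\Phi}(y)+2\,\mathrm{\Phi}(y)=\mathrm{\Phi}(x)+\mathrm{\Phi}(y)$, which is exactly the claim.

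The key step is therefore to establish $y\,\mathrm{\Phi}'(y)\le 2\,\mathrm{\Phi}(y)$ for every $y>0$. Writing $\mathrm{\Phi}(y)=\int_0^y \mathrm{\Phi}'(s)\,ds$ and using that $\mathrm{\Phi}'$ is concave on $[0,y]$, so that it lies above the chord joining $(0,\mathrm{\Phi}'(0))=(0,1)$ and $(y,\mathrm{\Phi}'(y))$, we have $\mathrm{\Phi}'(s)\ge \frac{y-s}{y}+\frac{s}{y}\,\mathrm{\Phi}'(y)$ for $s\in[0,y]$. Integrating this chord bound over $[0,y]$ yields
\begin{align*}
\mathrm{\Phi}(y)\ge \frac{y}{2}\bigl(1+\mathrm{\Phi}'(y)\bigr)\ge \frac{y}{2}\,\mathrm{\Phi}'(y),
\end{align*}
which is precisely $y\,\mathrm{\Phi}'(y)\le 2\,\mathrm{\Phi}(y)$. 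Combining the two displays gives $x\,\mathrm{\Phi}'(y)\le\mathrm{\Phi}(x)+\mathrm{\Phi}(y)$ for all $x,y>0$, completing the argument.

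The only genuinely non-routine point is recognizing that, via the tangent-line inequality, the two-variable statement collapses to the one-variable estimate $y\,\mathrm{\Phi}'(y)\le 2\,\mathrm{\Phi}(y)$; after that reduction it is the \emph{concavity of $\mathrm{\Phi}'$} (not merely the convexity of $\mathrm{\Phi}$) that supplies the decisive factor $2$ through the chord inequality. I expect no real obstacle beyond locating this reduction. It is worth noting that neither the growth property (ii) nor the bound $T_\lambda(\mathrm{\Phi})<\infty$ in property (iii) is used here: only $\mathrm{\Phi}(0)=0$, $\mathrm{\Phi}'(0)=1$, convexity of $\mathrm{\Phi}$, and concavity of $\mathrm{\Phi}'$ enter the proof.
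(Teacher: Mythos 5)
Your proof is correct: the first-order convexity (tangent-line) inequality $\mathrm{\Phi}(x)\geq \mathrm{\Phi}(y)+\mathrm{\Phi}'(y)(x-y)$ reduces the claim to the one-variable bound $y\,\mathrm{\Phi}'(y)\leq 2\,\mathrm{\Phi}(y)$, and the chord bound from concavity of $\mathrm{\Phi}'$ together with $\mathrm{\Phi}(0)=0$, $\mathrm{\Phi}'(0)=1$ delivers exactly that. Note that the paper itself gives no proof of this lemma --- it is quoted from Lauren\c{c}ot (2002), Lemma B.1 --- and your argument is essentially the standard one behind that cited result, so there is nothing further to reconcile.
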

	   	
	  	Now, in the following proposition, the equi-integrability is discussed.
	  	\begin{propos}\label{equiintegrability}
	  		Let $c^{in}\geq 0\in L^1 ]0,R]$ and the family $(c^h)_{(h,\Delta t)}$ is defined for all $h$ and $\Delta t$ given by (\ref{chap2:function_ch}) where $\Delta t$ satisfies the relation (\ref{22}). Then $(c^h)$ is weakly relatively sequentially compact in $L^1(]0,T[\times ]0,R])$.
	  	\end{propos}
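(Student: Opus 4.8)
The plan is to invoke the Dunford--Pettis theorem stated above: weak relative sequential compactness of $(c^h)$ in $L^1(\Omega)$ with $\Omega=\,]0,T[\times]0,R]$ follows once the two hypotheses, equiboundedness (\ref{equiboundedness}) and equiintegrability (\ref{equiintegrable}), are verified on $\Omega$. Equiboundedness is already available: integrating the uniform estimate (\ref{36}) in $t$ gives $\sup_{h}\|c^h\|_{L^1(\Omega)}\le T\,\|c^{in}\|_{L^1}e^{\eta\|S\|_{L^\infty}T}<\infty$. Hence the entire task reduces to producing a single superlinear weight $\Phi$, independent of $h$ and $\Delta t$, for which $\int_\Omega \Phi(c^h)\,dx\,dt$ is bounded uniformly.

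For this I would take $\Phi\in{C}_{VP,\infty}$ to be the convex function furnished by the refined De La Vall\'ee--Poussin theorem and adapted to the initial datum, so that $\Phi\ge0$, $\Phi(p)/p\to\infty$ and $\mathcal I=\int_0^R\Phi(c^{in})\,dx<\infty$ as in (\ref{convex}), and then track the discrete functional $n\mapsto \sum_{i}\Delta x_i\,\Phi(c_i^n)$ along the scheme. Using convexity in the form $\Phi(c_i^{n+1})-\Phi(c_i^n)\le \Phi'(c_i^{n+1})(c_i^{n+1}-c_i^n)$ together with the explicit increment $c_i^{n+1}-c_i^n$ read off from (\ref{gun}), the right-hand side splits into the coagulation loss, coagulation gain, fragmentation loss and fragmentation gain contributions. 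The two loss terms carry the favourable sign, being $-\Phi'(c_i^{n+1})\times(\text{non-negative})\le0$ by the non-negativity of $c^h$ established in the previous proposition, and are simply discarded.

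The heart of the argument is the linearisation of the two gain terms through the preceding Lemma, $a\Phi'(b)\le\Phi(a)+\Phi(b)$. For the fragmentation gain, writing each product as $c_k^n\Phi'(c_i^{n+1})\le\Phi(c_k^n)+\Phi(c_i^{n+1})$ and summing over $i$, the singular breakage rate is tamed because the daughter distribution integrates to $\sum_i\Delta x_i B_{i,k}=\zeta(x_k)\le\eta$ while $S_k\le R^{1+\alpha}$ on $]0,R]$ (here $\alpha\in(-1,0]$ is essential), leaving a bound $C\,\Delta t\sum_k\Delta x_k\Phi(c_k^n)+C\,\Delta t\sum_i\Delta x_i\Phi(c_i^{n+1})$. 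For the coagulation gain the same device, applied after re-indexing the flux (\ref{numfluxaggre}) so that $i$ labels the newly formed particle, reduces the quadratic term by $\|K\|_\infty$ and the uniform $L^1$ bound (\ref{36}) to the same shape, again modulo an absorbable $\Phi(c_i^{n+1})$ piece. Collecting these yields a discrete Gr\"onwall inequality $(1-C\Delta t)\sum_i\Delta x_i\Phi(c_i^{n+1})\le(1+C\Delta t)\sum_i\Delta x_i\Phi(c_i^n)$; under (\ref{22}) the factor $1-C\Delta t$ stays bounded below, so iterating gives $\sum_i\Delta x_i\Phi(c_i^n)\le e^{CT}\mathcal I$, i.e. $\sup_{t\in[0,T]}\int_0^R\Phi(c^h(t,x))\,dx\le e^{CT}\mathcal I$, and integration in $t$ delivers (\ref{equiintegrable}).

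I expect the main obstacle to be the bookkeeping that makes the gain terms close as a Gr\"onwall loop rather than generating uncontrolled weights. Two points need care. First, because the time stepping is explicit, the natural convexity inequality produces $\Phi'(c_i^{n+1})$ rather than $\Phi'(c_i^n)$, and one must confirm---using the concavity of $\Phi'$ and property (\ref{Tproperty}) of the class ${C}_{VP,\infty}$---that the resulting $\Phi(c_i^{n+1})$ contributions are genuinely of order $\Delta t$ and can be absorbed on the left. Second, the singularity of $B(x,y)\sim x^\alpha$ at the origin must be absorbed through the number-preserving normalisation $\int_0^y B(x,y)\,dx=\zeta(y)\le\eta$ rather than pointwise, since $B_{i,k}$ itself blows up as $i\to0$; keeping the singular variable inside the $\zeta$-integral is precisely what prevents a spurious weight $\int_0^R x^\alpha\Phi(c)\,dx$ from appearing.
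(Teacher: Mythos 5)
Your skeleton matches the paper's proof: Dunford--Pettis, the De La Vall\'ee--Poussin function $\Phi$ with $\int_0^R\Phi(c^{in})\,dx<\infty$, the convexity inequality $\Phi(c_i^{n+1})-\Phi(c_i^n)\le \Phi'(c_i^{n+1})(c_i^{n+1}-c_i^n)$, discarding the loss terms by sign, and closing a discrete Gr\"onwall recursion under (\ref{22}). The coagulation gain is also treated as in the paper (though the absorbable $\Phi(c_i^{n+1})$ piece there secretly needs the mesh regularity hypothesis (\ref{meshcondition}), via $\sum_{k=\gamma_{i-1,j}}^{\gamma_{i,j}-1}\Delta x_k\le Q\Delta x_i$, which you leave implicit). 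The genuine gap is in the fragmentation gain. Splitting $c_k^n\Phi'(c_i^{n+1})\le\Phi(c_k^n)+\Phi(c_i^{n+1})$ while leaving the rate $S_kB_{i,k}$ outside, as you propose, produces the two terms
\begin{align*}
\Delta t\sum_{k}\Delta x_k\,S_k\Phi(c_k^n)\sum_{i<k}B_{i,k}\Delta x_i
\qquad\text{and}\qquad
\Delta t\sum_{i}\Delta x_i\,\Phi(c_i^{n+1})\sum_{k>i}S_kB_{i,k}\Delta x_k.
\end{align*}
The $\zeta$-normalisation $\sum_i B_{i,k}\Delta x_i\le\eta$ tames only the first term, where the sum runs over the daughter index $i$ against the measure $\Delta x_i$. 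In the second term the sum runs over the parent index $k$, and since $S(v)B(u,v)=(\alpha+2)u^\alpha$ exactly by (\ref{selectionrate})--(\ref{breakage funcn}), the discrete product satisfies $S_kB_{i,k}=(\alpha+2)x_i^{\alpha}$, so that $\sum_{k>i}S_kB_{i,k}\Delta x_k\le(\alpha+2)R\,x_i^\alpha$, which blows up as $x_i\to0$ whenever $\alpha<0$ (the smallest midpoint $x_0\le h/2$ tends to $0$ with $h$). The coefficient multiplying $\Phi(c_i^{n+1})$ is therefore not uniformly $O(\Delta t)$, the absorption step fails near the origin, and your Gr\"onwall loop does not close. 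This is exactly the ``spurious weight $x^\alpha\Phi(c)$'' you warn against in your last paragraph; but your proposed remedy (keeping the singular variable inside the $\zeta$-integral) is structurally unavailable for this half of the split, because the lemma $a\Phi'(b)\le\Phi(a)+\Phi(b)$ forces the index $i$ onto one of the two $\Phi$'s while the rate must then be summed over $k$.

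The paper closes this gap by a different pairing in Lemma 3.4. It first rewrites the gain as $(\alpha+2)\Delta t\sum_{k} c_k^n\Delta x_k\sum_{i<k}x_i^{\alpha}\Delta x_i\,\Phi'(c_i^{n+1})$ (see (\ref{convexfrag1})) and then applies $x_i^{\alpha}\Phi'(c_i^{n+1})\le\Phi(x_i^{\alpha})+\Phi(c_i^{n+1})$: the singular factor goes \emph{inside} $\Phi$, while $c_k^n$ stays outside as the weight, controlled by the $L^1$ bound (\ref{36}). Then $\Phi(c_i^{n+1})$ carries the harmless coefficient $(\alpha+2)\Delta t\sum_k c_k^n\Delta x_k$ and is absorbed, while $\Phi(x_i^{\alpha})$ is handled by the growth property (\ref{Tproperty}) of the class ${C}_{VP,\infty}$: $\Phi(x_i^{\alpha})\le T_\lambda(\Phi)\,x_i^{\lambda\alpha}$ with $\lambda$ chosen so that $\lambda\alpha>-1$, whence $\sum_{i<k}x_i^{\lambda\alpha}\Delta x_i\le\int_0^{x_k}x^{\lambda\alpha}dx=x_k^{1+\lambda\alpha}/(1+\lambda\alpha)\le(1+x_k)/(1+\lambda\alpha)$, and the resulting term is finite by the $L^1$ and first-moment bounds of Proposition 3.3. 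Note that this is the step where the \emph{refined} De La Vall\'ee--Poussin theorem (polynomial growth of $\Phi$) is genuinely needed --- not, as you suggest, for the explicit-in-time absorption issue, which is insensitive to (\ref{Tproperty}). To repair your proof, replace your fragmentation splitting by this pairing; the remainder of your argument then goes through essentially as written.
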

	  	\begin{proof}
	  		Our focus here is to obtain a similar result as (\ref{convex}) for the family of function $c^h$. The integral of $\mathrm{\Phi}(c^h)$ by using the
	  		sequence $c_i^n$ can be written as
	  		\begin{align*}
	  			\int_0^T \int_0^R \mathrm{\Phi}(c^h(t,x))dx\,dt=&\sum_{n=0}^{N-1}\sum_{i=0}^{\mathrm{I}^h}\int_{\tau_n}\int_{\Lambda_i^h}\mathrm{\Phi}
	  			\bigg(\sum_{k=0}^{N-1}\sum_{j=0}^{\mathrm{I}^h}c_j^k\chi_{\Lambda_j^h}(x)\chi_{\tau_k}(t)\bigg)dx\,dt\\
	  			=&\sum_{n=0}^{N-1}\sum_{i=0}^{\mathrm{I}^h}\Delta t\Delta x_i\mathrm{\Phi}(c_i^n).
	  		\end{align*}
	  		The convexity of the function $\mathrm{\Phi}$ leads to the estimate
	  		\begin{align*}
	  			\left(c_i^{n+1}-c_i^{n}\right) \mathrm{\Phi}^{'}(c_i^{n+1})\geq \mathrm{\Phi}(c_i^{n+1})-\mathrm{\Phi}(c_i^{n}).
	  		\end{align*}
	  		Now, multiply the above equation by $\Delta x_i$ and summing with respect to $i$ on both sides yield
	  		\begin{align*}
	  			\sum_{i=0}^{\mathrm{I}^h} \Delta x_i \left[\mathrm{\Phi}(c_i^{n+1})-\mathrm{\Phi}(c_i^{n})\right]\leq &\sum_{i=0}^{\mathrm{I}^h} \Delta x_i \left[(c_i^{n+1}-c_i^{n})\mathrm{\Phi}^{'}(c_i^{n+1})\right].
	  		\end{align*}
By using discrete coagulation and fragmentation terms, it can further be rewritten as	  	
\begin{align}\label{integrability}
\sum_{i=0}^{\mathrm{I}^h} {\Delta x_i \left[\mathrm{\Phi}(c_i^{n+1})-\mathrm{\Phi}(c_i^{n})\right]} \le & {\|K\|}_{L^\infty} \Delta t \sum_{i=0}^{\mathrm{I}^h}\sum_{j=0}^{i-1}\Delta{ x_j}c_j^n \sum_{k=\gamma_{i-1,j}}^{\gamma_{i,j}-1}\Delta {x_k}c_k^n\mathrm{\Phi}^{'}(c_i^{n+1}) \nonumber \\
& +   \Delta t \sum_{i=0}^{\mathrm{I}^h}  \sum_{k=i+1}^{\mathrm{I}^h}  S_k B_{i,k} c_k^n \Delta x_k \Delta x_i \mathrm{\Phi}^{'}(c_i^{n+1}).
\end{align}
For the coagulation term, the first part of the right hand side of the above equation, by changing the order of summation and the convexity result given as in Lemma 3.4, we have

\begin{align*}
	 {\|K\|}_{L^\infty} \Delta t & \sum_{i=0}^{\mathrm{I}^h}\sum_{j=0}^{i-1}\Delta{ x_j}c_j^n \sum_{k=\gamma_{i-1,j}}^{\gamma_{i,j}-1}\Delta {x_k}c_k^n\mathrm{\Phi}^{'}(c_i^{n+1})\\
	 \le & {\|K\|}_{L^\infty} \Delta t \sum_{j=0}^{\mathrm{I}^h}\Delta{ x_j}c_j^n \sum_{i=j+1}^{\mathrm{I}^h} \sum_{k=\gamma_{i-1,j}}^{\gamma_{i,j}-1}\Delta {x_k}[\mathrm{\Phi}(c_k^{n})+\mathrm{\Phi}(c_i^{n+1})].
	\end{align*}
  It is easy to see that,
\begin{align}\label{39}
	\sum_{i=j+1}^{\mathrm{I}^h} \sum_{k=\gamma_{i-1,j}}^{\gamma_{i,j}-1}\Delta {x_k}\mathrm{\Phi}(c_k^{n})= \sum_{k=\gamma_{j,j}}^{\gamma_{\mathrm{I}^h,j}-1}\Delta {x_k}\mathrm{\Phi}(c_k^{n}) \leq \sum_{k=0}^{\mathrm{I}^h}\Delta {x_k}\mathrm{\Phi}(c_k^{n}).
	\end{align}	
	Further, to simplify the term \[ \sum_{i=j+1}^{\mathrm{I}^h} \sum_{k=\gamma_{i-1,j}}^{\gamma_{i,j}-1}\Delta {x_k}\mathrm{\Phi}(c_i^{n+1}),\] we proceed as follows. Notice that
	\begin{align*}
\sum_{k=\gamma_{i-1,j}}^{\gamma_{i,j}-1}\Delta {x_k}= x_{\gamma_{i,j}-1/2}-x_{\gamma_{i-1,j}-1/2}.
	\end{align*}
	where $x_{\gamma_{i,j}-1/2}$ and $x_{\gamma_{i-1,j}-1/2}$ are being the left point approximations of $x_{i+1/2}-x_{j}$ and $x_{i-1/2}-x_{j}$, respectively. Hence, the  following inequality holds
	\begin{align*}
		x_{\gamma_{i,j}-1/2}-x_{\gamma_{i-1,j}-1/2} \leq (x_{i-1/2}-x_{j})-x_{\gamma_{i-1,j}-1/2}+\Delta {x_i}.
		\end{align*}
		Using the assumption on mesh (\ref{meshcondition}), one has
		\begin{align*}
			(x_{i-1/2}-x_{j})-x_{\gamma_{i-1,j}-1/2}\leq h \leq L\Delta x_i,
		\end{align*}
		while taking (14) leads to
		\begin{align*}
			(x_{i-1/2}-x_{j})-x_{\gamma_{i-1,j}-1/2}\leq \Delta x_i.
		\end{align*}
		Both the cases provide final result as
		\begin{align*}
			x_{\gamma_{i,j}-1/2}-x_{\gamma_{i-1,j}-1/2} \leq Q \Delta x_i
		\end{align*}
		for $Q=1+L$ or $Q=2$. Therefore,
		\begin{align}
			\sum_{i=j+1}^{\mathrm{I}^h} \sum_{k=\gamma_{i-1,j}}^{\gamma_{i,j}-1}\Delta {x_k}\mathrm{\Phi}(c_i^{n+1}) \leq Q \sum_{i=0}^{\mathrm{I}^h}\Delta{x_i}\mathrm{\Phi}(c_i^{n+1}).
		\end{align}
Again, using the convexity results on fragmentation term in the equation (\ref{integrability}), growth conditions (\ref{selectionrate})-(\ref{breakage funcn}) on selection and breakage functions and changing the order of summation follows,
		\begin{align}\label{convexfrag1}
			 \Delta t \sum_{i=0}^{\mathrm{I}^h}  \sum_{k=i+1}^{\mathrm{I}^h}  S_k B_{i,k} c_k^n \Delta x_k \Delta x_i \mathrm{\Phi}^{'}(c_i^{n+1})  = & (\alpha +2)\Delta t\sum_{i=0}^{\mathrm{I}^h}  \sum_{k=i+1}^{\mathrm{I}^h}  x_{i}^{\alpha} c_k^n \Delta x_k \Delta x_i \mathrm{\Phi}^{'}(c_i^{n+1})\nonumber \\
			  \leq & (\alpha +2)\Delta t\sum_{k=0}^{\mathrm{I}^h} c_k^n \Delta x_k \sum_{i=0}^{k-1}     \Delta x_i [\mathrm{\Phi}(c_i^{n+1})+\mathrm{\Phi}(x_{i}^{\alpha})] \nonumber\\
			  \leq & (\alpha +2)\Delta t\sum_{k=0}^{\mathrm{I}^h} c_k^n \Delta x_k \sum_{i=0}^{\mathrm{I}^h}  \mathrm{\Phi}(c_i^{n+1})\Delta x_i \nonumber \\
			 & + (\alpha +2)\Delta t\sum_{k=0}^{\mathrm{I}^h} c_k^n \Delta x_k \sum_{i=0}^{k-1}  \mathrm{\Phi}(x_{i}^{\alpha})\Delta x_i.
			\end{align}
			
		Let us estimate the two expressions on the right-hand side of the equation (\ref{convexfrag1}) separately. The first summation is evaluated as
			\begin{align}\label{Barik2}
				(\alpha + 2)  \Delta t \sum_{k=0}^{\mathrm{I}^h}    c_k^n \Delta x_k \sum_{i=0}^{\mathrm{I}^h}    \mathrm{\Phi}(c_i^{n+1})  \Delta x_i
				\le   (\alpha + 2)\|c^{in}\|_{L^1}\,e^{\eta \|S\|_{L^{\infty}} t}  \Delta t  \sum_{i=0}^{\mathrm{I}^h}    \mathrm{\Phi}(c_i^{n+1})  \Delta x_i.
			\end{align}
			 Using the Proposition 3.3, for the second term, we proceed as follows
			 \begin{align}\label{43}
			 	(\alpha + 2) \Delta t  \sum_{k=0}^{\mathrm{I}^h} \Delta x_k c_k^n \sum_{i=0}^{k-1} \mathrm{\Phi}( x_i^{\alpha}) \Delta x_i
			 	=&  (\alpha + 2) \Delta t \sum_{k=0}^{\mathrm{I}^h} \Delta x_k  c_k^n \sum_{i=0}^{k-1} \frac{ \mathrm{\Phi}( x_i^{\alpha})}  { x_i^{\lambda\alpha}} x_i^{\lambda\alpha}    \Delta x_i \nonumber\\
			 	\le & (\alpha + 2) \Delta t T_{\lambda}(\mathrm{\Phi}) \sum_{k=0}^{\mathrm{I}^h} \Delta x_k  c_k^n \sum_{i=0}^{k-1} x_i^{\lambda\alpha}    \Delta x_i~~ (~\mbox{by} ~\mbox{having}~(\ref{Tproperty})) \nonumber\\
			 	\le & (\alpha + 2) \Delta t T_{\lambda}(\mathrm{\Phi}) \sum_{k=0}^{\mathrm{I}^h} \Delta x_k  c_k^n \int_0^{x_k} x^{\lambda\alpha} dx \nonumber\\
			 	= & \frac{(\alpha + 2)}{(\lambda \alpha +1)} \Delta t T_{\lambda}(\mathrm{\Phi}) \sum_{k=0}^{\mathrm{I}^h}  c_k^n  x_k^{\lambda\alpha+1} \Delta{x_k}\nonumber\\
			 	\le & \frac{(\alpha + 2)}{(\lambda \alpha +1)} \Delta t T_{\lambda}(\mathrm{\Phi}) \bigg( \sum_{k=0}^{\mathrm{I}^h}   c_k^n \Delta{x_k} + \sum_{k=0}^{\mathrm{I}^h} x_k  c_k^n \Delta{x_k}  \bigg)\nonumber\\
			 	\le  & \frac{(\alpha + 2)}{(\lambda \alpha +1)} \Delta t T_{\lambda}(\mathrm{\Phi}) \bigg( \|c^{in}\|_{L^1}\,e^{\eta \|S\|_{L^{\infty}} t} + \mu_1^{in}  \bigg).
			 \end{align}
			 Consequently, all the results (\ref{39})-(\ref{43}) used in (\ref{integrability}) lead to
		\begin{align}\label{final}
			\sum_{i=0}^{\mathrm{I}^h} {\Delta x_i \left[\mathrm{\Phi}(c_i^{n+1})-\mathrm{\Phi}(c_i^{n})\right]} &\leq {\|K\|}_{L^\infty} \Delta t \sum_{j=0}^{\mathrm{I}^h}\Delta{ x_j}c_j^n \sum_{i=0}^{\mathrm{I}^h}\Delta {x_i}\mathrm{\Phi}(c_i^{n})\nonumber\\
			& + {\|K\|}_{L^\infty} Q \Delta t \sum_{j=0}^{\mathrm{I}^h}\Delta{ x_j}c_j^n \sum_{i=0}^{\mathrm{I}^h}\Delta {x_i}\mathrm{\Phi}(c_i^{n+1}) \nonumber\\
			& + (\alpha + 2)\|c^{in}\|_{L^1}\,e^{\eta \|S\|_{L^{\infty}} t}  \Delta t  \sum_{i=0}^{\mathrm{I}^h}   \Delta x_i \mathrm{\Phi}(c_i^{n+1})   \nonumber\\
			& + \frac{(\alpha + 2)}{(\lambda \alpha +1)} \Delta t T_{\lambda}(\mathrm{\Phi}) \bigg( \|c^{in}\|_{L^1}\,e^{\eta \|S\|_{L^{\infty}} t} + \mu_1^{in}  \bigg).		
		\end{align}
		
		It can further be simplified as
		\begin{align}
			\bigg(1-Q^{*} N^{*}\|c^{in}\|_{L^1}\,e^{\eta \|S\|_{L^{\infty}} T}  \Delta t   \bigg) \sum_{i=0}^{\mathrm{I}^h}   \Delta x_i \mathrm{\Phi}(c_i^{n+1}) \nonumber\\
			\le  \bigg(1+ \|K\|_{L^{\infty}}&\|c^{in}\|_{L^1}\,e^{\eta \|S\|_{L^{\infty}} T}\Delta t \bigg)\sum_{i=0}^{\mathrm{I}^h}   \Delta x_i \mathrm{\Phi}(c_i^{n}) \nonumber \\
			+  \frac{(\alpha + 2)}{(\lambda \alpha +1)}& \Delta t T_{\lambda}(\mathrm{\Phi}) \bigg( \|c^{in}\|_{L^1}\,e^{\eta \|S\|_{L^{\infty}} t} + \mu_1^{in}  \bigg),
		\end{align}
		where $Q^{*}$ and $N^{*}$ denote max$(Q,\alpha+2)$ and max$(\|K\|_{L^{\infty}},1)$, respectively.
		The former inequality implies that
		\begin{align}
			 \sum_{i=0}^{\mathrm{I}^h}   \Delta x_i \mathrm{\Phi}(c_i^{n+1}) \le A \sum_{i=0}^{\mathrm{I}^h}   \Delta x_i \mathrm{\Phi}(c_i^{n})+  B
			\end{align}
			where
			\begin{align*}
				A= \frac{\bigg(1+ \|K\|_{L^{\infty}}\|c^{in}\|_{L^1}\,e^{\eta \|S\|_{L^{\infty}} T} \Delta t\bigg)}{  \bigg(1-Q^{*} N^{*} \|c^{in}\|_{L^1}\,e^{\eta \|S\|_{L^{\infty}} T}  \Delta t   \bigg)},
			\end{align*}
			and
			\begin{align*}
				B=  \frac{(\alpha + 2)      \Delta t T_{\lambda}(\mathrm{\Phi}) \bigg( \|c^{in}\|_{L^1}\,e^{\eta \|S\|_{L^{\infty}} t} + \mu_1^{in}  \bigg) }{(\lambda \alpha +1)  \bigg(  1-Q^{*} N^{*} \|c^{in}\|_{L^1}\,e^{\eta \|S\|_{L^{\infty}} T}  \Delta t  \bigg)     }.
			\end{align*}
			Hence, the following is obtained
			\begin{align*}%\label{Barik8}
				\sum_{i=0}^{\mathrm{I}^h}   \Delta x_i \mathrm{\Phi}(c_i^{n}) \le A^{n} \sum_{i=0}^{\mathrm{I}^h}   \Delta x_i \mathrm{\Phi}(c_i^{in})+  B \frac{A^{n-1} -1}{A-1}.
			\end{align*}
			By using Jensen's inequality and (\ref{convex}), finally we get
			\begin{align*}%\label{Barik9}
				\int_0^R  \mathrm{\Phi}(c^{h}(t, x))\,dx \le & A^{n} \sum_{i=0}^{\mathrm{I}^h}   \Delta x_i \mathrm{\Phi} \bigg( \frac{1}{\Delta x_i } \int_{\Lambda_i^h}c^{in}(x) dx \bigg)+  B \frac{A^{n-1} -1}{A-1}\nonumber\\
				\le & A^{n} \sum_{i=0}^{\mathrm{I}^h}  \int_{\Lambda_i^h}  \mathrm{\Phi} (c^{in}(x) ) dx +  B \frac{A^{n-1} -1}{A-1}\nonumber\\
%				= & A^{n}  \int_0^R  \mathrm{\Phi} (c^{in}(x) ) dx +  B \frac{A^{n-1} -1}{A-1} \nonumber\\
				= & A^{n}\mathcal{I}+  B \frac{A^{n-1} -1}{A-1} < \infty, \quad \text{for all}\ \ \ t\in [0,T].
			\end{align*}
\end{proof}
Thus, applying the Dunford-Pettis theorem, one can say that the sequence $(c^h)_{h\in (0,1)}$ is weakly compact in $L^1$, as long as the time step condition (\ref{22}) holds, the exponent
is uniformly bounded with respect to $h$ and $\Delta t$. This guarantees that a subsequence of $(c^h)_{h\in (0,1)}$ exists and $c\in L^1(]0,T[\times ]0,R])$ is such that $c^h\rightharpoonup c$ for $h\rightarrow 0$.\enter

\begin{rem}
By a diagonal procedure, subsequences of $(c^h)_h$,$(K^h)_h$ and $
(B^h)_h$ can be extracted such that
$$K^h(u,v)\rightarrow K(u,v)\quad \text{and}\quad  B^h(u,v)\rightarrow B(u,v),$$
for almost every $(u,v)\in (0,R)\times (0,R)$ as $h\rightarrow 0.$
\end{rem}
Now, we show that the discrete aggregation and  fragmentation fluxes converge weakly to the continuous fluxes  written in terms of the function $c^h$. In order to do so, some point approximations are used which are given below. Denote the midpoint approximation as
\begin{align*}
	X^h:x\in ]0,R[\rightarrow
	X^h(x)=\sum_{i=0}^{\mathrm{I}^h}x_i\chi_{\Lambda_i^h}(x).
\end{align*}
The right endpoint approximation is taken as
\begin{align*}
	\Xi^h:x\in ]0,R[\rightarrow
	\Xi^h(x)=\sum_{i=0}^{\mathrm{I}^h}x_{i+1/2}\chi_{\Lambda_i^h}(x),
\end{align*}
while for the left endpoint, we consider
\begin{align*}
	\xi^h:x\in ]0,R[\rightarrow
	\xi^h(x)=\sum_{i=0}^{\mathrm{I}^h}x_{i-1/2}\chi_{\Lambda_i^h}(x).
\end{align*}
Also, define
\begin{align*}
	\Theta^h:(x, z)\in {]0,R[}^2 \rightarrow \Theta^{h}(x, z)=\sum_{i=0}^{\mathrm{I}^h} \sum_{j=0}^{i}x_{\gamma_{i,j}}\chi_{\Lambda_i^h}(x) \chi_{\Lambda_j^h}(z).
\end{align*}
	Note that the above approximations converge pointwise, i.e.\
	$$X^h(x)\rightarrow x,\ \ \Xi^h(x)\rightarrow x\quad \text{and} \quad \xi^h(x)\rightarrow x \hspace{0.2cm} \forall\ x\in ]0,R[$$ as $h\rightarrow
	0$. Further, for all $(x, z) \in {]0,R[}^2$, one has
	\begin{center}
		$	\begin{cases}
	\Theta^h:(x, z)	\rightarrow x-z, \ \ x \ge z	,\\[.5em]
		\Theta^h:(x, z)	\rightarrow 0, \ \  x \le z.
		\end{cases}$
	\end{center}
	 Thanks to the Dunford-Pettis and Egorov theorems, the following  lemma is also needed  to show the convergence of the truncated flux towards the continuous flux.
\begin{lem}{\label{Wconverge}} [\cite{laurenccot2002continuous}, Lemma A.2]
Let $\Pi$ be an open subset of $\mathbb{R}^m$ and let there exists a constant $l>0$ and two sequences $(z^1_n)_{n\in \mathbb{N}}$ and
$(z^2_n)_{n\in \mathbb{N}}$ such that $(z^1_n)\in L^1(\Pi), z^1\in L^1(\Pi)$ and $$z^1_n\rightharpoonup z^1,\ \ \ \text{weakly in}\
\,L^1(\Pi), \text{as}\ n\rightarrow \infty,$$ $(z^2_n)\in
		L^\infty(\Pi), z^2 \in L^\infty(\Pi),$ and for all $n\in
		\mathbb{N}, |z^2_n|\leq l$ with $$z^2_n\rightarrow z^2,\ \ \text{almost
			everywhere (a.e.) in}\ \ \Pi, \ \text{as}\ \ n\rightarrow
		\infty.$$ Then
		$$\lim_{n\rightarrow \infty}\|z^1_n(z^2_n-z^2)\|_{L^1(\Pi)}=0$$
		and $$z^1_n\, z^2_n\rightharpoonup z^1\, z^2,\ \ \ \text{weakly in}\
		\,L^1(\Pi), \text{as}\ n\rightarrow \infty.$$
\end{lem}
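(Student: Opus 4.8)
The plan is to derive the second (weak convergence) assertion from the first (the strong $L^1$ convergence $z^1_n(z^2_n-z^2)\to 0$), and to prove the first by combining the equi-integrability furnished by the Dunford--Pettis theorem with the uniform convergence on large sets furnished by Egorov's theorem. Throughout, the point is that $|z^2_n-z^2|\le 2l$ pointwise, so the only difficulty is controlling the mass of $z^1_n$ on the (small) sets where $z^2_n$ has not yet settled near $z^2$.

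First I would invoke the Dunford--Pettis theorem in the reverse direction: since $z^1_n\rightharpoonup z^1$ weakly in $L^1(\Pi)$, the sequence $(z^1_n)$ is relatively weakly compact, hence bounded, say $\sup_n\|z^1_n\|_{L^1(\Pi)}=:M<\infty$, and equi-integrable. Equi-integrability provides, for every $\varepsilon>0$, a $\delta>0$ such that $\sup_n\int_A|z^1_n|\,dx\le\varepsilon$ whenever $|A|\le\delta$, together with a set $\Pi_\varepsilon\subset\Pi$ of finite measure such that $\sup_n\int_{\Pi\setminus\Pi_\varepsilon}|z^1_n|\,dx\le\varepsilon$ (the tightness component, which is needed because $\Pi$ may be unbounded).

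Next, fixing such an $\varepsilon$ together with the corresponding $\delta$ and $\Pi_\varepsilon$, I would apply Egorov's theorem on the finite-measure set $\Pi_\varepsilon$: since $z^2_n\to z^2$ a.e., there is a measurable $E\subset\Pi_\varepsilon$ with $|\Pi_\varepsilon\setminus E|\le\delta$ on which $z^2_n\to z^2$ uniformly. Splitting the integral $\int_\Pi|z^1_n|\,|z^2_n-z^2|\,dx$ over the three regions $\Pi\setminus\Pi_\varepsilon$, $\Pi_\varepsilon\setminus E$ and $E$, I would bound the contributions of the first two regions using $|z^2_n-z^2|\le 2l$ together with the two equi-integrability estimates, obtaining at most $2l\varepsilon$ from each, and the contribution of $E$ by $M\sup_E|z^2_n-z^2|$, which tends to $0$ as $n\to\infty$ by the uniform convergence on $E$. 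Letting $n\to\infty$ and then $\varepsilon\to0$ yields $\|z^1_n(z^2_n-z^2)\|_{L^1(\Pi)}\to0$, the first assertion.

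Finally, for the weak convergence I would test against an arbitrary $\varphi\in L^\infty(\Pi)$ and use the decomposition
$$\int_\Pi (z^1_n z^2_n - z^1 z^2)\varphi\,dx = \int_\Pi z^1_n(z^2_n - z^2)\varphi\,dx + \int_\Pi (z^1_n - z^1)(z^2\varphi)\,dx.$$
The first integral is dominated by $\|\varphi\|_{L^\infty}\,\|z^1_n(z^2_n-z^2)\|_{L^1(\Pi)}$ and vanishes by the first assertion; the second vanishes because $z^2\varphi\in L^\infty(\Pi)$ is an admissible test function for the weak convergence $z^1_n\rightharpoonup z^1$. Hence $z^1_n z^2_n\rightharpoonup z^1 z^2$ weakly in $L^1(\Pi)$. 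The main obstacle I anticipate is the bookkeeping when $\Pi$ has infinite measure: Egorov's theorem is only available on sets of finite measure, so one must first discard a tail $\Pi\setminus\Pi_\varepsilon$ carrying little $z^1_n$-mass, and this is precisely where the tightness part of the Dunford--Pettis characterization, rather than mere equi-integrability on bounded sets, becomes indispensable.
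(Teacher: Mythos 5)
Your proof is correct, and it takes exactly the route the paper points to: the paper itself gives no proof of this lemma (it is quoted verbatim from the cited reference, Lemma A.2 of Lauren\c{c}ot--Mischler), remarking only that it rests on the Dunford--Pettis and Egorov theorems, which is precisely the combination you implement. Your handling of the infinite-measure case via the tightness part of Dunford--Pettis, the Egorov set, and the final decomposition against a test function $\varphi\in L^\infty(\Pi)$ is the standard complete argument, so nothing further is needed.
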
\enter
Finally, the following result explains the convergence of the numerical fluxes. For this, definitions of $c^h$, $K^h$, $B^h$ and $S^h$ given by (\ref{chap2:function_ch}),(\ref{chap2:function_aggregatediscrete}), (\ref{chap2:function_brkdiscrete}) and (\ref{chap2:function_selectiondiscrete}),
	respectively, are considered.
\begin{lem}\label{fluxconlemma}
Consider the approximations of the coagulation term as
	  	\begin{align*}
	  		\mathcal{C}^h(t,x)=\int_0^R \int_0^R
	  		\chi_{[0,\Xi^h(x)]}(u)\chi_{[\Theta^h(x,u),R]}(v) X^h(u)K^h(u,v)c^h(t,u)c^h(t,v)dvdu,
	  	\end{align*}
and for the fragmentation
	  	\begin{align*}
	  		\mathcal{F}^h(t,x)=\int_0^R \int_0^R
	  		\chi_{[0,\Xi^h(x)]}(u)\chi_{[\Xi^h(x),R]}(v) X^h(u)B^h(u,v)S^h(v)c^h(t,v)dvdu.
	  	\end{align*}
Then, there exists a subsequence of the family of $(c^h)_{h\in (0, 1)}$, such that
	  		$$\mathcal{C}^h\rightharpoonup \mathcal{C}_{nc}^R\quad \text{and}\quad \mathcal{F}^h\rightharpoonup \mathcal{F}_{c}^R$$
	 	  	in $L^1(]0,T[\times ]0,R])$ as $h\rightarrow 0.$
\end{lem}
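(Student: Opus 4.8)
The plan is to establish the weak $L^1$ convergence by duality: fixing a test function $\psi\in L^\infty(]0,T[\times]0,R])$, I would show that $\int_0^T\int_0^R \mathcal{C}^h\psi\,dx\,dt\to \int_0^T\int_0^R \mathcal{C}_{nc}^R\psi\,dx\,dt$, and similarly for the breakage flux. The first step is to insert the definitions of $\mathcal{C}^h$ and $\mathcal{F}^h$ and apply Fubini's theorem to carry out the $x$-integration first. This isolates the factors carrying $c^h$ and absorbs $\psi$ together with the characteristic functions and the point approximations into a bounded weight. For the fragmentation flux one is led to
$$\int_0^T\int_0^R \mathcal{F}^h(t,x)\,\psi(t,x)\,dx\,dt=\int_0^T\int_0^R c^h(t,v)\,G^h(t,v)\,dv\,dt,$$
with
$$G^h(t,v)=\int_0^R\int_0^R \chi_{[0,\Xi^h(x)]}(u)\,\chi_{[\Xi^h(x),R]}(v)\,X^h(u)\,B^h(u,v)\,S^h(v)\,\psi(t,x)\,du\,dx,$$
and to an analogous but bilinear expression for the aggregation flux.

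For the fragmentation flux, which is \emph{linear} in $c^h$, the argument is immediate. The weight $G^h$ is uniformly bounded: the apparent singularity of $B^h(u,v)$ as $v\to0$ is cancelled by the factor $S^h(v)$, since $B(u,v)S(v)=(\alpha+2)u^{\alpha}$ carries no $v$-singularity and $X^h(u)\,u^{\alpha}\sim u^{1+\alpha}$ is integrable for $\alpha\in(-1,0]$. Using the pointwise convergences $\Xi^h(x)\to x$, $X^h(u)\to u$ together with $B^h\to B$, $S^h\to S$ a.e.\ (the former from the Remark, the latter from the construction of $S^h$), dominated convergence gives $G^h\to G$ a.e., with $G$ the corresponding continuous weight. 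Since $c^h\rightharpoonup c$ weakly in $L^1$ and $\|G^h\|_{L^\infty}$ is uniformly bounded, Lemma~\ref{Wconverge} applies with $z^1_h=c^h$ and $z^2_h=G^h$, yielding $\int c^h G^h\to\int c\,G$, which is exactly $\int \mathcal{F}^h\psi\to\int\mathcal{F}_c^R\psi$.

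The genuine difficulty is the aggregation flux, where the integrand is \emph{bilinear} in $c^h$, namely $\int_0^T\!\int\!\int W^h(t,u,v)\,c^h(t,u)\,c^h(t,v)\,du\,dv\,dt$ with $W^h=X^h(u)K^h(u,v)\Psi^h(t,u,v)$, where $\Psi^h(t,u,v)=\int_0^R \chi_{[0,\Xi^h(x)]}(u)\chi_{[\Theta^h(x,u),R]}(v)\psi(t,x)\,dx$ is bounded and $W^h\to W:=u\,K(u,v)\,\Psi$ a.e. I would proceed in two moves. First, replace $W^h$ by its limit $W$: since $\{c^h\}$ is equiintegrable and $L^1$-bounded by Proposition~\ref{equiintegrability} and Proposition~3.3, the tensor products $\{c^h\otimes c^h\}$ are equiintegrable and bounded in $L^1(]0,T[\times]0,R[^2)$, so a Vitali--Egorov argument, together with $W^h\to W$ a.e.\ and the uniform bound on $W^h$, shows that $\int\!\int\!\int (W^h-W)\,c^h(t,u)c^h(t,v)\to0$. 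Second, pass to the limit in $\int\!\int\!\int W\,c^h(t,u)c^h(t,v)$ through the splitting
$$c^h(u)c^h(v)-c(u)c(v)=c^h(u)\big(c^h(v)-c(v)\big)+\big(c^h(u)-c(u)\big)c(v).$$
For the second summand the frozen weight $\tilde W(t,u)=\int_0^R W(t,u,v)c(t,v)\,dv$ lies in $L^\infty$, so it vanishes in the limit by the weak convergence $c^h\rightharpoonup c$; for the first summand I would apply Lemma~\ref{Wconverge} to $z^1_h=c^h-c\rightharpoonup0$ and $z^2_h(t,v)=\int_0^R W(t,u,v)c^h(t,u)\,du$, which is uniformly bounded in $L^\infty$.

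The \textbf{main obstacle} is precisely the a.e.\ convergence of this frozen weight $z^2_h(t,v)\to\int_0^R W(t,u,v)c(t,u)\,du$: the weak $L^1$ convergence of $c^h$ established so far is \emph{joint} in $(t,x)$, whereas the argument requires $c^h(t,\cdot)\rightharpoonup c(t,\cdot)$ in $L^1(]0,R[)$ for a.e.\ fixed $t$, so that $\int_0^R W(t,u,v)[c^h-c](t,u)\,du\to0$ pointwise in $(t,v)$ before Lemma~\ref{Wconverge} can be invoked. Upgrading the joint weak convergence to this pointwise-in-time statement is the crux; it rests on a time-equicontinuity (weak $L^1$ time-translation) estimate for $c^h$ which, combined with the spatial weak compactness of Proposition~\ref{equiintegrability} through an Arzel\`a--Ascoli type argument, gives $c^h(t,\cdot)\rightharpoonup c(t,\cdot)$ for every $t\in[0,T]$ and hence the required a.e.\ convergence of the frozen weights. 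Once this is secured, collecting the two fluxes yields $\mathcal{C}^h\rightharpoonup\mathcal{C}_{nc}^R$ and $\mathcal{F}^h\rightharpoonup\mathcal{F}_c^R$ in $L^1(]0,T[\times]0,R])$.
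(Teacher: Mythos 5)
Your route is genuinely different from the paper's. The paper first verifies that $\mathcal{C}^h$ and $\mathcal{F}^h$ coincide with the numerical fluxes $\mathcal{C}^n_{i+1/2}$, $\mathcal{F}^n_{i+1/2}$ on each cell $\tau_n\times\Lambda_i^h$, and then applies Lemma \ref{Wconverge} twice, first in the $u$ variable (with the bounded, a.e.\ convergent weight $\chi_{[0,\Xi^h(x)]}\chi_{[\Theta^h(x,u),R]}X^hK^h$, resp.\ $\chi\chi X^hB^hS^h$) and then in the $v$ variable, to obtain \emph{pointwise} convergence $\mathcal{C}^h(t,x)\to\mathcal{C}^R_{nc}(t,x)$ and $\mathcal{F}^h(t,x)\to\mathcal{F}^R_{c}(t,x)$ for every $(t,x)$, from which the weak convergence follows by boundedness. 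You instead argue by duality against $\psi\in L^\infty$. Your treatment of the fragmentation flux is complete and in fact cleaner than the paper's: since that flux is \emph{linear} in $c^h$, the single bounded weight $G^h\to G$ a.e.\ together with the joint weak convergence $c^h\rightharpoonup c$ in $L^1(]0,T[\times]0,R])$ puts you squarely within the hypotheses of Lemma \ref{Wconverge}, with no hidden fixed-$t$ statement needed. Your reduction of the coagulation flux (replacing $W^h$ by $W$ via equiintegrability of the products $c^h(t,u)c^h(t,v)$, then the bilinear splitting) is also sound as far as it goes.

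However, the coagulation half of your proposal has a genuine gap, which you flag honestly but do not close: the application of Lemma \ref{Wconverge} to the frozen weight $z^2_h(t,v)=\int_0^R W(t,u,v)c^h(t,u)\,du$ requires $z^2_h\to z^2$ a.e., i.e.\ $c^h(t,\cdot)\rightharpoonup c(t,\cdot)$ in $L^1(]0,R[)$ for a.e.\ \emph{fixed} $t$. This truly does not follow from the joint weak convergence: for a sequence oscillating rapidly in time, say $1+\sin(t/h)g(x)$, the sequence converges weakly to $1$ while the bilinear expression converges weakly to $1+\tfrac12 g(u)g(v)$, so some time compactness specific to the family (\ref{chap2:function_ch}) must be invoked; it is not a removable technicality. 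Closing the gap requires extracting a time-equicontinuity estimate from the scheme itself: summing (\ref{gun}) against $\phi(x_i)$ for smooth $\phi$, performing a discrete integration by parts, and using the uniform bounds $\mathcal{C}^n_{i+1/2}\le R\|K\|_\infty\bigl(\|c^{in}\|_{L^1}e^{\eta\|S\|_{L^\infty}T}\bigr)^2$ and $\mathcal{F}^n_{i+1/2}\le (\alpha+2)R^{2+\alpha}\|c^{in}\|_{L^1}e^{\eta\|S\|_{L^\infty}T}$ gives
\begin{equation*}
\Bigl|\int_0^R X^h(x)\bigl(c^h(t,x)-c^h(s,x)\bigr)\phi\bigl(\xi^h(x)\bigr)\,dx\Bigr|\le C(R,T)\,\bigl(|t-s|+\Delta t\bigr)\,\|\phi\|_{W^{1,\infty}},
\end{equation*}
after which an Arzel\`a--Ascoli argument over a countable dense family of test functions, combined with the equiintegrability of Proposition \ref{equiintegrability}, yields the pointwise-in-time weak convergence you need. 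Your proposal asserts this chain of reasoning but does not carry any of it out, and it is the crux of the bilinear term. It is fair to add that the paper's own proof silently assumes the same fact: both of its applications of Lemma \ref{Wconverge} are performed at fixed $t$, treating $c^h(t,\cdot)$ as weakly convergent in $L^1(]0,R])$ without justification. So your diagnosis exposes a step the paper glosses over; but, as a proof, yours is incomplete at exactly that point.
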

\begin{proof} Before we begin the proof, it is important to notice that the terms $\mathcal{C}^h(t,x)$ and $\mathcal{F}^h(t,x)$ coincide with  the terms $\mathcal{C}_i^n$  and $\mathcal{F}_i^n$, respectively,  whenever $t\in \tau_n$ and $x\in
	  \Lambda_i^h$. It is easy to prove as
	
	  	\begin{align*}
	  		\mathcal{C}^h(t,x)=&\int_0^{x_{i+1/2}} \int_{\Theta^h(x,u)}^R
	  		 X^h(u)K^h(u,v)c^h(t,v)c^h(t,u)dvdu,\\
	  		  = & \sum_{j=0}^{i} \int_{\Lambda_j^h}
	  		 \sum_{k=\gamma_{i,j}}^{\mathrm{I^h}} \int_{\Lambda_k^h}\bigg[ X^h(u)\bigg(\sum_{a=0}^{\mathrm{I}^h}\sum_{b=0}^{\mathrm{I}^h}K_{a,b}\chi_{\Lambda_a^h}(u)\chi_{\Lambda_b^h}(v)\bigg)\bigg(\sum_{b=0}^{\mathrm{I}^h}c_b\chi_{\Lambda_b^h}(v)\bigg)
	  		 \\&
	  		 \times \bigg(\sum_{a=0}^{\mathrm{I}^h}c_a^n\chi_{\Lambda_a^h}(u)\bigg)\bigg]dvdu\\
	  		  = & \sum_{j=0}^{i} \sum_{k=\gamma_{i,j}}^{\mathrm{I^h}} \int_{\Lambda_j^h}\int_{\Lambda_k^h} x_j K_{j,k} c_j^n c_k^n dvdu = \mathcal{C}_{i+1/2}^n,
	  	\end{align*}
	  	while
	  	\begin{align*}
	  		\mathcal{F}^h(t,x)=& \int_{0}^{x_{i+1/2}} \int_{x_{i+1/2}}^{R}
	  		X^h(u)B^h(u,v)S^h(v)c^h(t,v)dv du\\
	  		=& \sum_{j=0}^{i} \int_{\Lambda_j^h}
	  		\sum_{k=i+1}^{\mathrm{I^h}} \int_{\Lambda_k^h}\bigg[ X^h(u)\bigg(\sum_{a=0}^{\mathrm{I}^h}\sum_{b=0}^{\mathrm{I}^h}B_{a,b}\chi_{\Lambda_a^h}(u)\chi_{\Lambda_b^h}(v)\bigg)\bigg(\sum_{b=0}^{\mathrm{I}^h}S_b\chi_{\Lambda_b^h}(v)\bigg)
	  		\\&
	  		\times \bigg(\sum_{b=0}^{\mathrm{I}^h}c_b^n\chi_{\Lambda_b^h}(v)\bigg)\bigg]dvdu\\
	  		=&\sum_{j=0}^{i} \sum_{k=i+1}^{\mathrm{I^h}} \int_{\Lambda_j^h}\int_{\Lambda_k^h} x_j B_{j,k} S_k c_k^n dvdu = \mathcal{F}_{i+1/2}^n.
	  	\end{align*}
	  	
Now, to move further, as discussed about the extraction of subsequences in Remark 3.6,
	  			we know that for $(t,x) \in ]0,T [\times ]0,R]$ and $(u,v )\in ]0,R]\times ]0,R]$ almost everywhere, the sequence $X^h(\cdot)K^h(\cdot,v)$ is bounded in $L^\infty$. Also,
	 \begin{align*}
	 	\chi_{[0,\Xi^h(x)]}(u)\chi_{[\Theta^h(x,u),R]}(v) X^h(u)K^h(u,v) \rightarrow  \chi_{[0,x]}(u)\chi_{[x-u,R]}(v) u K(u,v)
	\end{align*} 			
	as $h \rightarrow 0$. Hence, having Lemma {\ref{Wconverge}} leads to
	\begin{align}\label{coagulationconverge}
\int_0^R \chi_{[0,\Xi^h(x)]}(u) & \chi_{[\Theta^h(x,u),R]}(v) X^h(u)K^h(u,v)c^h(t,u) du \nonumber\\
 & \rightarrow \int_{0}^{R}\chi_{[0,x]}(u)\chi_{[x-u,R]}(v) u K(u,v)c(t,u) du.
	\end{align}  			
	  			
	  	The above expression says that (\ref{coagulationconverge}) holds for every $(t, x) \in ( ]0,T[\times ]0,R] )$ and almost every $v$ and $c^h$ converges weakly. Again using Lemma  {\ref{Wconverge}}, it yields
	  		\begin{align*}
	  			\mathcal{C}^h(t,x) \rightarrow \mathcal{C}_{nc}^R(t,x)
	  		\end{align*}	
	  			for every $(t,x)\in ( ]0,T[\times ]0,R] )$. Note that the weak convergence for $\mathcal{C}^h$ follows by this pointwise convergence. A similar approach shows the convergence of $\mathcal{F}^h$ as below,
	  			
	  			\begin{align*}
	  				X^h(\cdot )B^h(\cdot, v)S^h(v) = (\alpha+2) x \frac{x^{\alpha}}{v^{\alpha+1}}v^{\alpha+1} = (\alpha+2) x^{1+\alpha} \in L^{\infty}]0,R]\ \text{for almost all}\ v \in ]0,R].
	  			\end{align*}
	  			Since, the above is uniformly bounded and
	  			\begin{align*}
	  				\chi_{[0,\Xi^h(x)]}(u) \chi_{[\Xi^h(x),R]}(v)X^h(u)B^h(u,v)S^h(v) \rightarrow \chi_{[0,x]}(u) \chi_{[x,R]}(v)uB(u,v)S(v)
	  			\end{align*}
	  			pointwise almost everywhere as $h \to 0$. Hence, one has
	  			\begin{align}\label{convergefragmentation}
	  				\int_0^R & \chi_{[0,\Xi^h(x)]}(u) \chi_{[\Xi^h(x),R]}(v)X^h(u)B^h(u,v)S^h(v) du \nonumber\\
	  				& \rightarrow \int_0^R \chi_{[0,x]}(u) \chi_{[x,R]}(v)uB(u,v)S(v)du
	  			\end{align}
	  			which holds for every $(t, x) \in ( ]0,T[\times ]0,R] )$ and almost every $v$. We also know that $c^h$ weakly converges to $c$ in $L^1]0,R]$.
	  			So, applying Lemma {\ref{Wconverge}} entails
	  			\begin{align}
	  			\int_0^R & \chi_{[0,\Xi^h(x)]}(u) \chi_{[\Xi^h(x),R]}(v)X^h(u)B^h(u,v)S^h(v) c^h(t,v)dvdu \nonumber\\
	  			& \rightarrow \int_0^R \chi_{[0,x]}(u) \chi_{[x,R]}(v)uB(u,v)S(v)c(t,v)dvdu,
	  		\end{align}
	  		and therefore,
	  			\begin{align*}
	  			\mathcal{F}^h(t,x) \rightarrow  \mathcal{F}_c^R(t,x)
	  			\end{align*}
	  			for every $(t,x) \in (]0,T [\times ]0,R])$. Thanks to boundedness of $\mathcal{F}^h$, the pointwise convergence gives weak convergence.
	  		
	  		Finally, we are in position to prove the main result Theorem \ref{maintheorem} below. For the proof, a compactly supported test function $\varphi\in C^1([0,T[\times [0,R])$ is taken. The support of
	  		$\varphi$ with respect to $t$ satisfies $\text{Supp}_t \varphi\subset
	  		[0,t_{N-1}]$ for small enough time step $\Delta t$. Let us consider finite volume and left endpoint approximations for time and space variables of $\varphi$ on $\tau_n\times \Lambda_i^h$ by
	  		$$\varphi_i^n :=\frac{1}{\Delta t}\int_{t_n}^{t_{n+1}}\varphi(t,x_{i-1/2})dt.$$
	  		Now, multiply the equation (\ref{gun}) by $\varphi_i^n$, taking summation over $n\in
	  		\{0,...,N-1\}$ and $i\in \{0,...,\mathrm{I}^h\}$ lead to
	  		\begin{align*}		
	  		\sum_{n=0}^{N-1}\sum_{i=0}^{\mathrm{I}^h} \left[\Delta x_i
	  		x_i(c_i^{n+1}-c_i^n)\varphi_i^n +\Delta t \left(\mathcal{C}_{i+1/2}^n-\mathcal{C}_{i-1/2}^n\right)\varphi_i^n-\Delta t \left(\mathcal{F}_{i+1/2}^n-\mathcal{F}_{i-1/2}^n\right)\varphi_i^n\right]=0.
	  	\end{align*}
	  	Further, if we open the summation for each $i$ and $n$, then a discrete integration by parts provides
	  	\begin{align}\label{discrete24081}
	  		\sum_{n=0}^{N-1}\sum_{i=0}^{\mathrm{I}^h}\Delta
	  		x_i x_i c_i^{n+1}(\varphi_i^{n+1}-\varphi_i^n) & + \sum_{n=0}^{N-1}\sum_{i=0}^{\mathrm{I}^h-1}\Delta t [\mathcal{C}_{i+1/2}^n-\mathcal{F}_{i+1/2}^n](\varphi_{i+1}^{n+1}-\varphi_{i}^{n}) \nonumber \\
	  		 &+ \sum_{i=0}^{\mathrm{I}^h}\Delta
	  		x_i x_i c_i^{in}\varphi_i^0 - \sum_{n=0}^{N-1} \Delta
	  		t \,\mathcal{C}_{\mathrm{I}^h+1/2}^n \varphi_{\mathrm{I}^h}^n=0.
	  	\end{align}		
	  		The first and third expressions of the above equation are simplified using only the function $c^h$ while remaining terms are expressed in terms of the functions $c^h$ and $\mathcal{F}^h$. For the first part, consider
	  		\begin{align*}
	  			\sum_{n=0}^{N-1}\sum_{i=0}^{\mathrm{I}^h}\Delta x_i & x_i
	  			c_i^{n+1}(\varphi_i^{n+1}-\varphi_i^n)+\sum_{i=0}^{\mathrm{I}^h}\Delta x_i x_i
	  			c_i^{in}\varphi_i^0=\\ &\sum_{n=0}^{N-1}\sum_{i=0}^{\mathrm{I}^h}\int_{\tau_{n+1}}\int_{\Lambda_i^h}X^h(x)c^h(t,x)\frac{\varphi(t,\xi^h(x))-\varphi(t-\Delta
	  				t,\xi^h(x))}{\Delta t}dxdt \\&+\sum_{i=0}^{\mathrm{I}^h}\int_{\Lambda_i^h}X^h(x)c^h(0,x)\frac{1}{\Delta
	  				t}\int_0^{\Delta t}\varphi(t,\xi^h(x))dtdx,
	  		\end{align*}	
	  			which can be further rewritten as
	  			\begin{align}\label{25thdec_1}
	  				\sum_{n=0}^{N-1}\sum_{i=0}^{\mathrm{I}^h}\Delta x_i & x_i
	  				c_i^{n+1}(\varphi_i^{n+1}-\varphi_i^n)+\sum_{i=0}^{\mathrm{I}^h}\Delta x_i x_i
	  				c_i^{in}\varphi_i^0= \nonumber \\
	  				\int_{\Delta t}^T &\int_{0}^R X^h(x)c^h(t,x)\frac{\varphi(t,\xi^h(x))-\varphi(t-\Delta
	  					t,\xi^h(x))}{\Delta t}dxdt \nonumber \\&+\int_{0}^R X^h(x)c^h(0,x)\frac{1}{\Delta t}\int_0^{\Delta t}\varphi(t,\xi^h(x))dtdx.
	  			\end{align}
	  			
	  			Since, the derivative of $\varphi$ is bounded and $\varphi\in C^1([0,T[\times [0,R])$ is having compact support, thus  $$\frac{1}{\Delta t}\int_0^{\Delta
	  				t}\varphi(t,\xi^h(x))dt\rightarrow \varphi(0,x)\quad \text{as}\quad \max \{h,\Delta t\}\rightarrow 0$$ uniformly with respect to $t$ and $x$. Since, $c^h(0,x)\rightarrow c^{in}$ in $L^1]0,R]$, Lemma \ref{Wconverge} yields
	  			
	  			\begin{align*}
	  				\int_{0}^R X^h(x)c^h(0,x)\frac{1}{\Delta t}\int_0^{\Delta
	  					t}\varphi(t,\xi^h(x))dtdx\rightarrow \int_{0}^R x
	  				c^{in}(x)\varphi(0,x)dx
	  			\end{align*}
  			as $X^h(x)$ converges pointwise in $[0,R]$.	To treat the first part on the right-hand side of (\ref{25thdec_1}, a Taylor expansion of the smooth function $\varphi$ gives
	  			\begin{align*}
	  				\frac{\varphi(t,\xi^h(x))-\varphi(t-\Delta
	  					t,\xi^h(x))}{\Delta t}= & \frac{\varphi(t,x)+(x-\xi^h(x))\frac{\partial
	  						\varphi}{\partial x}-\varphi(t,x)+\Delta t \frac{\partial
	  						\varphi}{\partial t}-(x-\xi^h(x))\frac{\partial \varphi}{\partial
	  						x}+ O(h\,\Delta t)}{\Delta t},
	  			\end{align*}
	  			which means that, as max $\{h,\Delta t\}\rightarrow 0$,
	  			\begin{align*}
	  				\frac{\varphi(t,\xi^h(x))-\varphi(t-\Delta t,\xi^h(x))}{\Delta
	  					t}\rightarrow \frac{\partial \varphi}{\partial t}(t,x)
	  			\end{align*}
	  			uniformly. Again an application of Lemma \ref{Wconverge} and from Proposition \ref{equiintegrability}, one has
	  		
	  			\begin{align*}
	  				\int_0^T\int_0^R
	  				X^h(x)c^h(t,x)\frac{\varphi(t,\xi^h(x))-\varphi(t-\Delta
	  					t,\xi^h(x))}{\Delta t}dx\,dt \rightarrow \int_0^T\int_0^R x
	  				c(t,x)&\frac{\partial \varphi}{\partial t}(t,x)dx\,dt.
	  			\end{align*}
	  			Therefore,
	  			\begin{align*}
	  				\int_{\Delta t}^T \int_0^R
	  				\underbrace{X^h(x)c^h(t,x)\frac{\varphi(t,\xi^h(x))-\varphi(t-\Delta
	  						t,\xi^h(x))}{\Delta t}}_A dx\,dt&=\\ \int_0^T
	  				\int_0^R A\, dx\,dt -\int_0^{\Delta t} \int_0^R A\, dx\,dt & \rightarrow
	  				\int_0^T \int_0^R x c(t,x)\frac{\partial \varphi}{\partial
	  					t}(t,x)dx\,dt
	  			\end{align*}
	  			is established. At the end, taking the remaining terms of the equation (\ref{discrete24081}) and writing them in terms of $\mathcal{C}^h$ and $\mathcal{F}^h$ as
	  			
	\begin{align*}
	&	\sum_{n=0}^{N-1}\sum_{i=0}^{\mathrm{I}^h-1}\Delta t
	  	[\mathcal{C}_{i+1/2}^n-\mathcal{F}_{i+1/2}^n](\varphi_{i+1}^n-\varphi_i^n)- \sum_{n=0}^{N-1}\Delta t \mathcal{C}_{{\mathrm{I}}^h+1/2}^{n}\varphi_{{\mathrm{I}}^h}^n \\
	 &=\sum_{n=0}^{N-1}\sum_{i=0}^{\mathrm{I}^h-1}\int_{\tau_n}\int_{\Lambda_i^h}[\mathcal{C}_{i+1/2}^n-\mathcal{F}_{i+1/2}^n] \frac{1}{\Delta x_i}\left[\varphi(t,x_{i+1/2})-\varphi(t,x_{i-1/2})\right]dxdt\\
	 &- \sum_{n=0}^{N-1}\int_{\tau_n}\mathcal{C}_{{\mathrm{I}}^h+1/2}^{n}\varphi(t,R- \Delta x_{{\mathrm{I}}^h})dt\\
	 & = \int_0^T \int_0^{R-\Delta
	 	x_{\mathrm{I}^h}}[\mathcal{C}^h(t,x)-\mathcal{F}^h(t,x)] \frac{\partial \varphi}{\partial x}(t,x)dxdt -\int_{0}^{T} \mathcal{C}^h(t,R) \varphi(t,R-\Delta x_{{\mathrm{I}}^h})dt.	  				
	\end{align*}
 Thanks to Lemma {\ref{fluxconlemma}}, the weak convergence for the fluxes $\mathcal{C}^h\rightharpoonup \mathcal{C}_{nc}^R$ and $\mathcal{F}^h\rightharpoonup \mathcal{F}_{c}^R$ exists in $L^1(]0,T[\times ]0,R])$ which determine
	\begin{align*}
		&\int_0^T \int_0^{R-\Delta x_{\mathrm{I}^h}}[\mathcal{C}^h(t,x)-\mathcal{F}^h(t,x)] \frac{\partial \varphi}{\partial x}(t,x)dxdt -\int_{0}^{T} \mathcal{C}^h(t,R) \varphi(t,R-\Delta x_{\mathrm{I}^h})dt\\
		 & = \left(\int_0^T
		\int_0^R - \int_0^T \int_{\Delta x_{\mathrm{I}^h}}\right) [\mathcal{C}^h(t,x)-\mathcal{F}^h(t,x)] \frac{\partial \varphi}{\partial x}(t,x) dxdt- \int_{0}^{T} \mathcal{C}^h(t,R) \varphi(t,R-\Delta x_{\mathrm{I}^h})dt \\ & \rightarrow
		\int_0^T \int_0^R [\mathcal{C}_{nc}^R-\mathcal{F}_c^R] \frac{\partial \varphi}{\partial x}(t,x)dxdt-\int_{0}^{T} \mathcal{C}_{nc}^R(t,R) \varphi(t,R)dt
			\ \ \ \text{as}\ h\rightarrow 0.
	\end{align*}
	
	This complete the proof of Theorem 3.1 as all the terms in the equation (\ref{240800}) are obtained.
		\end{proof}

\section{Error Analysis}\label{errorde}
Here, we have discussed the error estimates for the coagulation and multiple fragmentation equations. It is important to mention here that, for the coagulation, results are taken from \cite{bourgade2008convergence}. So, our focus is to develop the study for multiple breakage model and combine the findings with the outcomes of \cite{bourgade2008convergence} for coagulation. Taking the uniform mesh is essential for estimating the error component, i.e., $\Delta x_i=h$\,\,$\forall i\in \{0,1,2,\ldots, \mathrm{I}^{h}\}$. The following theorem provides the first order error estimates by taking some assumptions about the kernels and initial datum.
	\begin{thm}\label{errorth1}
	Let the coagulation and fragmentation kernels satisfy $K,B\in W^{1,\infty}_{loc}(\mathbb{R}^{+} \times \mathbb{R}^{+})$ and selection rate, initial datum  $S,c^{in}\in W^{1,\infty}_{loc}(\mathbb{R}^{+})$.	Moreover, consider a  uniform volume mesh and  time step $\Delta t$  that satisfy the condition (\ref{22}). Then, the following error estimates
	\begin{align}\label{errorbound}
	\|c^h-c\|_{L^{\infty}(0,T;L^{1}(0, R))} \leq D(T, R)(h+\Delta t)
	\end{align}
	holds,	where $c$ is the weak solution to (\ref{maineq}) and $D(T, R)$ is a constant depending on $R$ and $T$. 
	\end{thm}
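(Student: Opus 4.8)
The plan is to compare the scheme \eqref{gun} with the exact solution evaluated against the \emph{same} discrete fluxes and to close the resulting error identity by a discrete Gronwall argument, following the template of \cite{bourgade2008convergence} for the coagulation part and supplying the new estimates for the singular fragmentation part. First I would introduce the cell averages of the exact solution, $\bar c_i^n := \frac{1}{\Delta x_i}\int_{\Lambda_i^h} c(t_n,x)\,dx$, and define the error $E_i^n := c_i^n - \bar c_i^n$. Since $c^{in}\in W^{1,\infty}_{loc}$, the initial projection error obeys $\sum_i \Delta x_i |E_i^0|\le Ch$. The idea is then to regard the cell-averaged exact solution as satisfying \eqref{gun} up to a \emph{consistency residual} $\rho_i^n$: plugging $\bar c_i^n$ into the discrete update reproduces the genuine discrete coagulation and fragmentation operators plus an extra term measuring how far the averaged exact solution is from solving the scheme exactly.

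Second, I would estimate this residual and establish $\sum_i \Delta x_i |\rho_i^n|\le C(R,T)(h+\Delta t)$. The residual has three origins: the explicit Euler stepping, contributing $O(\Delta t)$ after a Taylor expansion controlled by $\partial_t(xc)$; the midpoint and endpoint quadratures used in \eqref{numfluxaggre}--\eqref{numfluxbrk}, contributing $O(h)$ whenever the integrands are Lipschitz; and the replacement of $K,B,S$ by the cell averages \eqref{chap2:function_aggregatediscrete}--\eqref{chap2:function_selectiondiscrete}, again $O(h)$ under the $W^{1,\infty}_{loc}$ hypothesis on the kernels. For coagulation these bounds are precisely those of \cite{bourgade2008convergence}, so I would only need to check they transfer unchanged to the non-conservative truncation $\mathcal{C}_{nc}^R$; the substance of the new argument lies in the fragmentation flux.

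Third, subtracting the perturbed scheme from \eqref{gun} yields an evolution inequality for $E_i^n$. The coagulation difference is quadratic and I would linearise it as $c_j^n c_k^n - \bar c_j^n \bar c_k^n = E_j^n c_k^n + \bar c_j^n E_k^n$, bounding the coefficients by the uniform $L^1$ bound of Proposition 3.3 and its continuous counterpart for $c$. The fragmentation difference is linear in the error, and after multiplying by $\Delta x_i/x_i$, summing over $i$, and exchanging the order of summation exactly as in \eqref{breakagecondition}, it is controlled by $\eta\|S\|_{L^\infty}\sum_k \Delta x_k |E_k^n|$. Collecting terms produces $\|E^{n+1}\|_{L^1}\le (1+C\Delta t)\|E^n\|_{L^1} + C(h+\Delta t)\Delta t$, and a discrete Gronwall inequality together with $\|E^0\|_{L^1}\le Ch$ delivers \eqref{errorbound} with $D(T,R)=Ce^{CT}$.

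The main obstacle will be the consistency and stability estimates for the \emph{singular} breakage kernel near the origin: because $B(x,y)=\frac{\alpha+2}{y}(x/y)^\alpha$ blows up as $x\to 0$ when $\alpha<0$, it is not in $W^{1,\infty}$ up to $x=0$, and the weight $1/x_i$ that appears after dividing by $x_i$ amplifies the difficulty on the first cells. I would treat this by isolating the cells adjacent to the origin and showing that the singular contributions to both $\rho_i^n$ and to the linearised error coefficients remain $O(h)$ thanks to the integrability granted by $\alpha>-1$ --- the very condition that keeps $\int_0^{x_k} B(x,x_k)\,dx$ and $\int_0^{x_k} x^{\lambda\alpha}\,dx$ finite, as already used in \eqref{breakagecondition} and \eqref{43}. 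Verifying that these near-origin terms do not degrade the first-order rate is the crux of extending the locally bounded error theory of \cite{bourgade2008convergence, kumarfinite} to the present singular setting.
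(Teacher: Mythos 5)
Your plan is correct in outline and rests on the same three pillars as the paper's argument --- first-order consistency, $L^1$ stability, and a Gronwall closure --- but it organizes them in the opposite direction. The paper never introduces cell averages $\bar c_i^n$ of the exact solution or a discrete error recursion. Its key device is a lemma (equations (\ref{convert1})--(\ref{convert2})) that rewrites the discrete fluxes of the scheme (\ref{gun}) on a uniform mesh as the \emph{continuous} coagulation--fragmentation operators evaluated at $c^h$, with the discretized kernels $K^h,B^h,S^h$ and shifted integration limits $\xi^h,\Xi^h$, plus residuals $\varepsilon(\mathbb{C},h),\varepsilon(\mathbb{F},h)$ that are $O(h)$ in $L^1$. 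This exhibits $c^h$ as the solution of a perturbed continuous equation (\ref{maineq2}); subtracting the truncated equation (\ref{main}) gives the error inequality (\ref{errorfull}), whose terms $\epsilon_\beta(\mathbb{C},h)$, $\epsilon_\beta(\mathbb{F},h)$ and the time-discretization term (\ref{time1}) are bounded using $|B^h-B|\le \|B\|_{W^{1,\infty}}h$, $|S^h-S|\le \|S\|_{W^{1,\infty}}h$, and a \emph{continuous} Gronwall lemma in $\int_0^t\|c^h(s)-c(s)\|_{L^1}\,ds$ finishes. Your route (project $c$ onto the mesh, bound the consistency residual $\rho_i^n$, linearize the quadratic coagulation difference, discrete Gronwall) is the classical dual of this; it avoids manipulating continuous formulations but must redo, cell by cell, the quadrature bookkeeping that the paper disposes of once in the two $L^1$ residuals.

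Two substantive caveats. First, your consistency step silently requires regularity of the exact solution: the quadrature errors are $O(h)$ only if $x\mapsto c(t,x)$ is Lipschitz uniformly in $t$, and the Euler residual needs $\partial_t c\in L^{\infty}$. The paper devotes Proposition \ref{bound2} to precisely this --- $L^\infty$ bounds on $c^h$ and $c$, and the $W^{1,\infty}$ bound (\ref{bound1}) on $c$, both obtained by Gronwall arguments on the continuous equation --- and your proposal nowhere establishes it; without it the claim $\sum_i\Delta x_i|\rho_i^n|\le C(h+\Delta t)$ is unsupported. Second, concerning your final paragraph: you are right that the kernel (\ref{breakage funcn}) is not $W^{1,\infty}$ up to the origin (it is not even bounded there, since $B(x,y)\ge(\alpha+2)/y$ for $x<y$), but be aware that the paper's proof does not handle this case either --- it treats $\|B\|_{W^{1,\infty}}$ and $\|BS\|_{L^{\infty}}$ over the truncated domain as finite constants, so the theorem as actually proved covers only kernels regular up to the origin, and the singular class of Section 2 is addressed only numerically. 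Your proposed near-origin cell splitting using the integrability afforded by $\alpha>-1$ is therefore an extension beyond the paper's proof, and at present it is only a sketch: making it rigorous (in particular showing the quadrature error on the first cells, of size roughly $h^{2+\alpha}$, does not pollute the $O(h)$ rate after division by $x_i$) is new work, not a reconstruction of what the paper does.
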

						
	Before proving the theorem, consider the following proposition, which gives an estimate on the approximate solution $c^h$ and the exact solution $c$ with certain additional assumptions. These computations are significant in predicting the error.
					
	\begin{propos}\label{bound2}
	Assume that kinetic parameters $K,B \in L^{\infty}_{loc}(\mathbb{R}^{+} \times \mathbb{R}^{+})$ and  $S \in  L^{\infty}_{loc}(\mathbb{R}^{+})$ and  the condition (\ref{22}) holds for time step $\Delta t$. Also, let the initial  datum $c^{in}$  restricted in $ L^{\infty}_{loc}$. Then, solution $c^h$ and $c$ to (\ref{maineq}) are essentially bounded in $(0,T)\times (0, R)$ as
	$$ \|c^h\|_{L^{\infty}((0,T)\times (0, R))}\leq D(T, R), \hspace{0.4cm} \|c\|_{L^{\infty}((0,T)\times (0, R))}\leq D(T, R). $$
	Furthermore, if the kernels $K, B \in    W^{1,\infty}_{loc}(\mathbb{R}^{+} \times \mathbb{R}^{+})$ and $S, c^{in} \in W^{1,\infty}_{loc}(\mathbb{R}^{+})$. Then there exists a positive constant $L(T, R)$ such that
	\begin{align}\label{bound1}
	\|c\|_{W^{1,\infty}(0, R)} \leq D(T, R).
	\end{align}
	\end{propos}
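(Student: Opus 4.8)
The plan is to prove the two assertions separately by Gronwall-type arguments that re-use the $L^1$ control (\ref{36}) already available. First I would bound $c^h$ in $L^\infty$. Rewriting the scheme (\ref{gun}) on the uniform mesh ($\Delta x_i=h$) as
$$c_i^{n+1}=c_i^n-\frac{\Delta t}{x_i\Delta x_i}\bigl(\mathcal{C}_{i+1/2}^n-\mathcal{C}_{i-1/2}^n\bigr)+\frac{\Delta t}{x_i\Delta x_i}\bigl(\mathcal{F}_{i+1/2}^n-\mathcal{F}_{i-1/2}^n\bigr),$$
I split each flux difference into its gain (birth) and loss (death) parts exactly as in the derivation leading to (\ref{mon}). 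The loss parts are non-positive and may be dropped for an upper bound. The coagulation gain, divided by $x_i\Delta x_i$, is controlled using $x_j\le x_i$ for $j<i$, the inequality $\sum_{k=\gamma_{i-1,j}}^{\gamma_{i,j}-1}\Delta x_k\le Q\Delta x_i$ obtained in the proof of Proposition \ref{equiintegrability}, and $K_{j,k}\le\|K\|_\infty$, giving at most $Q\|K\|_\infty\|c^h(t^n)\|_{L^\infty}\sum_j c_j^n\Delta x_j$, which by (\ref{36}) is bounded by $Q\|K\|_\infty\|c^h(t^n)\|_{L^\infty}\|c^{in}\|_{L^1}e^{\eta\|S\|_\infty T}$; the fragmentation gain is bounded by $\|B\|_\infty\|S\|_\infty\|c^{in}\|_{L^1}e^{\eta\|S\|_\infty T}$ via the same reordering of sums used for (\ref{breakagecondition}). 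This gives $\|c^h(t^{n+1})\|_{L^\infty}\le(1+C_1\Delta t)\|c^h(t^n)\|_{L^\infty}+C_2\Delta t$ with $C_1,C_2$ depending only on $R,T$ and the (now bounded) kernels, and a discrete Gronwall iteration started from $\|c^h(0)\|_{L^\infty}\le\|c^{in}\|_{L^\infty}$ yields $\|c^h\|_{L^\infty((0,T)\times(0,R))}\le D(T,R)$.

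Since this bound is independent of $h$ and $\Delta t$ and $c^h\rightharpoonup c$ by Theorem \ref{maintheorem}, the limit inherits it: a family bounded in $L^\infty((0,T)\times(0,R))$ that converges weakly in $L^1$ has its limit in the same $L^\infty$ ball, by lower semicontinuity of the norm. Hence $\|c\|_{L^\infty((0,T)\times(0,R))}\le D(T,R)$, which is the first part of the proposition.

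For the Lipschitz bound the kernels are now $W^{1,\infty}_{loc}$. Writing the continuous equation behind (\ref{main}) as $\partial_t c=B_c(c)-D_c(c)+B_f(c)-D_f(c)$ and differentiating in $x$, I would derive an evolution equation for $w:=\partial_x c$. By the Leibniz rule the coagulation birth yields a convolution $\tfrac12\int_0^x K(x-z,z)\,w(t,x-z)\,c(t,z)\,dz$, bounded by $\tfrac12\|K\|_\infty\|w(t,\cdot)\|_{L^\infty}\|c(t,\cdot)\|_{L^1}$, plus a remainder containing $\partial_1 K$ and the boundary value at $z=x$; the coagulation death yields $w(x)\int_0^R K(x,y)c\,dy$ plus a bounded term; the fragmentation birth yields only bounded terms, using $\partial_x B\in L^\infty$, the boundary value $B(x,x)S(x)c(t,x)$, and the $L^\infty$ and $L^1$ bounds on $c$; and the fragmentation death yields $S(x)w(x)+S'(x)c(x)$. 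Collecting, $w$ satisfies $\partial_t w=a(t,x)w+(\text{a convolution controlled by }\|w\|_{L^\infty})+g(t,x)$ with $a,g\in L^\infty([0,T]\times(0,R))$ uniformly, thanks to the first part and the $W^{1,\infty}_{loc}$ hypotheses. A Gronwall estimate for $t\mapsto\|w(t,\cdot)\|_{L^\infty}$, started from $\|w(0,\cdot)\|_{L^\infty}\le\|c^{in}\|_{W^{1,\infty}}$, then gives $\|\partial_x c(t,\cdot)\|_{L^\infty}\le D(T,R)$ on $[0,T]$, and with the first part this is exactly (\ref{bound1}).

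The hard part is making the last step rigorous, since $c$ is a priori only a weak solution and need not be differentiable. The clean route is to carry out the same Gronwall estimate at the discrete level, bounding the differences $|c_{i+1}^n-c_i^n|/h$ uniformly in $i,n,h$ by exactly the mechanism above (which never differentiates $c$), and then to pass to the limit: a uniform discrete Lipschitz bound forces $c(t,\cdot)$ to be Lipschitz with the same constant. One must also control the boundary terms produced when differentiating the birth fluxes, namely $K(x,0)c(t,0)c(t,x)$ and $B(x,x)S(x)c(t,x)$; these are finite precisely because of the $L^\infty$ bound from the first part together with the boundedness of the regularized kernels.
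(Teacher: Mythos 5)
Your proposal is correct, and its first half takes a genuinely different --- and more complete --- route than the paper. The paper's own proof never estimates $c^h$ at all: it works purely with the continuous solution, integrating (\ref{main}) in time, dropping the negative (loss) terms, bounding the coagulation gain by $\|K\|_{L^\infty}\|c\|_{\infty,1}\sup_y c(s,y)$ and the fragmentation gain by $\|BS\|_{L^\infty}\|c\|_{\infty,1}$, and closing with Gronwall's lemma; the asserted bound on $c^h$ is left unproved, implicitly deferred to the analogous discrete argument of \cite{bourgade2008convergence}. You do the opposite: you prove the discrete bound first, by the gain/loss splitting of the numerical fluxes, the $L^1$ estimate (\ref{36}) and a discrete Gronwall iteration, and then pass the bound to $c$ through weak $L^1$ convergence and $(L^1)^*=L^\infty$ duality. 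This buys you both halves of the statement, and the $c^h$ bound is the one the error analysis actually consumes (e.g.\ in $\|\varepsilon(\mathbb{C},h)\|_{L^1}\le \frac{R}{2}\|c^h\|_{L^\infty}^2\|K\|_{L^\infty}h$), whereas the paper's direct continuous estimate is shorter but covers only $c$. For (\ref{bound1}) the two arguments essentially coincide: the paper likewise integrates in time, differentiates in $x$ and applies Gronwall, arriving at precisely the inequality you describe, with the same convolution term in $\partial_x c$, the same Leibniz boundary terms and the same $W^{1,\infty}$ remainders. The regularity objection you raise --- that a weak solution cannot simply be differentiated --- applies verbatim to the paper's proof, which silently ignores it; your discrete difference-quotient repair is the natural way to make that step rigorous, though you (like the paper) leave it as a sketch rather than carrying out the second-difference estimates on the fluxes.
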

\begin{proof}
	The aim is to bound the solution $c$ to the continuous equation (\ref{maineq}). For this, integrating Eq. (\ref{main}) with respect to the time variable and leaving the negative terms out yield 
	\begin{align*}
	c(t,x) \leq c^{in}(x)+ \frac{1}{2}\int_{0}^{t}\int_0^x K(y,x-y)c(s,y)c(s,x-y)dy\,ds+\int_{0}^{t}\int_x^{R} B(x,y)S(y)c(s,y)dy\,ds.	
	\end{align*}						
	Thus, it follows
	\begin{align*}
	\sup_{x \in (\,0,\,R)\,}	c(t,x)\leq \underbrace{c^{in}(x)+ \|BS\|_{L^{\infty}} \|c\|_{\infty,1} t}_{\alpha(t)} + \underbrace{ {\|K\|_{L^{\infty}}} \|c\|_{\infty,1}}_{\beta} \int_{0}^{t}\sup_{y \in (\,0, R)\,} c(s,y)\,ds,
	\end{align*}
	where $\|c\|_{\infty,1}$ symbolizes the norm of $c$ in $L^{\infty}(0,T; L^{1}(\,0, R)\,)$. Subsequently, using  Gronwall's lemma and integration by parts lead to accomplish the proof as
	\begin{align*}
	\sup_{x \in (\,0, R)\,}	c(t,x) & \leq  \alpha(t)+ \int_{0}^{t}\alpha(s) \beta e^{\int_{s}^{t}\beta\,dr}\,ds\\
	&\leq\alpha(t)+\beta\Big[\frac{\alpha(s)e^{\beta(t-s)}}{-\beta}\Bigr|_{0}^{t} - \int_{0}^{t}\|SB\|_{L^{\infty}}\|c\|_{{\infty,1}}\frac{e^{\beta (t-s)}}{-\beta}\,ds \Big]\\	
	& \leq \alpha(0) e^{\beta t} + \frac{ \|BS\|_{L^{\infty}}  \|c\|_{\infty,1}}{\beta}[(e^{\beta t}-1) ].
	\end{align*}
	Therefore, 
	\begin{align*}
	\|c\|_{L^{\infty}((0,T) \times (\,0, R)\,)} \leq D(T, R).
	\end{align*}
	Now, moving to the conclusion of an estimate of (\ref{bound1}). To begin with, integrate the Eq. (\ref{main}) with respect to time variable $t$ and next, differentiate it with respect to the volume variable $x$. The maximum value over the domain of $x$ is then achieved
	\begin{align*}
   \left\|\frac{\partial c}{\partial x}(x)\right\|_{L^{\infty}} \leq &	\left\|\frac{\partial c^{in}}{\partial x}\right\|_{L^{\infty}}+ \Big\{\frac{1}{2}  \|K\|_{L^{\infty}}  \|c\|_{L^{\infty}}^{2} 
	+ 2  \|K\|_{W^{1,\infty}} \|c\|_{L^{\infty}} \|c\|_{\infty,1}  R+ \|BS\|_{L^{\infty}}  \|c\|_{L^{\infty}} \\
	& + 2 \|B\|_{W^{1,\infty}}  \|S\|_{L^{\infty}}  \|c\|_{\infty,1}  R+ \|S\|_{W^{1,\infty}}  \|c\|_{L^{\infty}} \Big \} t\\
	& + \left(2  \|K\|_{L^{\infty}}  \|c\|_{\infty,1} R + \|S\|_{L^{\infty}}\right) \int_{0}^{t}\left \|\frac{\partial c}{\partial x}\right\|_{L^{\infty}}\,ds.
	\end{align*}
	Applying Gronwall's lemma as used in priori boundedness of $c$ to establish (\ref{bound1}). 
	\end{proof}	
	The discrete coagulation and fragmentation terms in (\ref{gun}) are written for uniform mesh as follows:
	\begin{align}\label{discrte1}
	-\frac{\mathcal{C}_{i+1/2}^n-\mathcal{C}_{i-1/2}^n}{h}=h\sum_{j=0}^{i-1}x_j K_{j,i-j-1}c_{i}^{n}c_{i-j-1}^{n}-h\sum_{j=0}^{\mathrm{I}^h}x_i K_{i,j}c_{i}^{n}c_{j}^{n},
	\end{align}
	and
	\begin{align}\label{discrte2}
	\frac{\mathcal{F}_{i+1/2}^n-\mathcal{F}_{i-1/2}^n}{h}= -\sum_{j=0}^{i-1}x_j S_{i} B_{j,i}c_{i}^{n}\Delta x_{j}+\sum_{j=i+1}^{\mathrm{I}^h}x_i S_{j} B_{i,j}c_{j}^{n}\Delta x_{j}.
	\end{align}
	The discrete terms above are transformed into a continuous expression using the following lemma.
	\begin{lem}
	Consider the initial condition $c^{in}$ $\in W^{1,\infty}_{loc}$ and uniform mesh, $\Delta x_{i}=h$ $\forall i$. Also assuming that $K$, $B$, and $S$ follow the conditions   $K,B,S\in W^{1,\infty}_{loc}.$ Let $(s,x)\in \tau_{n}\times \Lambda_{i}^{h}$, where $n\in \{0,1,\ldots, N-1\}\, , i\in\{0,1,2,\ldots,\mathrm{I}(h)\}$. Then
	\begin{align}\label{convert1}
	-\frac{\mathcal{C}_{i+1/2}^n-\mathcal{C}_{i-1/2}^n}{x_{i}h}=&\frac{1}{2}\int_{0}^{\xi^h(x)}K^h(x^{'},x-\Xi^{h}(x^{'}))c^{h}(s,x^{'})c^{h}(s,x-\Xi^{h}(x^{'}))\,dx{'}\\ \nonumber
	&-\int_{0}^{R}K^h(x,x^{'})c^{h}(s,x)c^{h}(s,x^{'})\,dx^{'}+\varepsilon(\mathbb{C},h),
	\end{align}
							
	\begin{align}\label{convert2}
\frac{\mathcal{F}_{i+1/2}^n-\mathcal{F}_{i-1/2}^n}{x_{i}h}=\int_{\Xi^{h}(x)}^{R}S^{h}(x^{'})B^{h}(x,x^{'})c^{h}(s,x^{'})\,dx^{'}-S^{h}(x)c^{h}(s,x)+\varepsilon(\mathbb{F},h),
	\end{align}
	where $\varepsilon(\mathbb{C},h)$ and  $\varepsilon(\mathbb{F},h)$ expresses the first order term with respect to $h$ in the strong $L^{1}$ topology:
	\begin{align}
	\|\varepsilon(\mathbb{C},h)\|_{L^1}\leq \frac{R}{2}\|c^{h}\|^{2}_{L^{\infty}}\|K\|_{L^{\infty}}h,
	\end{align}
	\begin{align}
	\|\varepsilon(\mathbb{F},h)\|_{L^1}\leq {R}\|BS\|_{L^{\infty}}\|c^{h}\|_{\infty,1} h.
	\end{align}
	\end{lem}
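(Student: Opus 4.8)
The plan is to start from the explicit uniform-mesh flux differences \eqref{discrte1} and \eqref{discrte2}, to rewrite each lattice sum as an integral of the piecewise-constant functions $c^h$, $K^h$, $B^h$, $S^h$ over the cells $\Lambda_j^h$, and then, in each identity, to split off the term that coincides exactly with the claimed continuous expression, absorbing everything that remains into $\varepsilon(\mathbb{C},h)$ and $\varepsilon(\mathbb{F},h)$. Throughout I fix $(s,x)\in\tau_n\times\Lambda_i^h$, so that $c^h(s,x)=c_i^n$, $\xi^h(x)=x_{i-1/2}$, $\Xi^h(x)=x_{i+1/2}$, and for $x'\in\Lambda_j^h$ one has $\Xi^h(x')=x_{j+1/2}$ with $x-\Xi^h(x')\in\Lambda_{i-j-1}^h$ on a uniform mesh (using $x_{i-1/2}=ih$). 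These identifications make every integrand in the statement constant on each cell, so each integral becomes $h$ times a finite sum directly comparable with \eqref{discrte1}--\eqref{discrte2}.

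For the coagulation identity I first note that the loss parts match identically: dividing the loss sum of \eqref{discrte1} by $x_i$ reproduces $\int_0^R K^h(x,x')c^h(s,x)c^h(s,x')\,dx'=h\,c_i^n\sum_{k=0}^{\mathrm{I}^h}K_{i,k}c_k^n$ with no remainder. The defect lives entirely in the gain term. The discrete gain, divided by $x_i$, is $\frac{h}{x_i}\sum_{j=0}^{i-1}x_j K_{j,i-1-j}c_j^n c_{i-1-j}^n$, whereas the target integral $\frac12\int_0^{\xi^h(x)}K^h(x',x-\Xi^h(x'))c^h(s,x')c^h(s,x-\Xi^h(x'))\,dx'$ equals $\frac{h}{2}\sum_{j=0}^{i-1}K_{j,i-1-j}c_j^n c_{i-1-j}^n$. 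The key step is a symmetrisation: via the bijection $j\mapsto i-1-j$, the symmetry $K_{j,i-1-j}=K_{i-1-j,j}$, and the mesh identity $x_j+x_{i-1-j}=ih$, the discrete gain equals $\frac{ih^2}{2x_i}\sum_{j=0}^{i-1}K_{j,i-1-j}c_j^n c_{i-1-j}^n$, so that on $\Lambda_i^h$ one gets $\varepsilon(\mathbb{C},h)=\bigl(\tfrac{ih}{x_i}-1\bigr)\tfrac{h}{2}\sum_j(\cdots)=-\tfrac{h^2}{4x_i}\sum_{j=0}^{i-1}K_{j,i-1-j}c_j^n c_{i-1-j}^n$. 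Bounding the inner sum by $i\,\|K\|_{L^\infty}\|c^h\|_{L^\infty}^2$, using $i/x_i\le 1/h$ and $\sum_i h\simeq R$, then yields the stated $L^1$ estimate.

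For the fragmentation identity the birth term again matches exactly, since $\Xi^h(x)=x_{i+1/2}$ is a mesh node and $\int_{\Xi^h(x)}^R S^h B^h c^h\,dx'=h\sum_{j=i+1}^{\mathrm{I}^h}S_j B_{i,j}c_j^n$ reproduces the birth sum of \eqref{discrte2}. All the error is carried by the loss term: the discrete loss divided by $x_i$ is $\frac{h}{x_i}\sum_{j=0}^{i-1}x_j S_i B_{j,i}c_i^n$, to be compared with $S^h(x)c^h(s,x)=S_i c_i^n$. Here I invoke the mass-conservation normalisation $\int_0^{x_i}x B(x,x_i)\,dx=x_i$ to write $S_i c_i^n=\frac{c_i^n}{x_i}\int_0^{x_i}x\,S_i B(x,x_i)\,dx$, so that on $\Lambda_i^h$ one has $\varepsilon(\mathbb{F},h)=\frac{c_i^n}{x_i}\bigl[\int_0^{x_i}x\,S_i B(x,x_i)\,dx-h\sum_{j=0}^{i-1}x_j S_i B_{j,i}\bigr]$, namely $c_i^n/x_i$ times the quadrature-plus-boundary error of a first-order rule for that integral. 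The two contributions, the uncovered endpoint interval $[x_{i-1/2},x_i]$ and the cellwise replacement of $x\,S_iB(x,x_i)$ by its nodal/averaged value, are each $O(x_i h)$: the boundary piece is at most $\|BS\|_{L^\infty}\int_{x_{i-1/2}}^{x_i}x\,dx\le\tfrac12\|BS\|_{L^\infty}x_i h$, and the interior quadrature error is controlled by the Lipschitz bounds from $K,B,S\in W^{1,\infty}_{\mathrm{loc}}$. The factor $x_i$ cancels the weight $1/x_i$, leaving $|\varepsilon(\mathbb{F},h)|\lesssim\|BS\|_{L^\infty}|c_i^n|\,h$ on $\Lambda_i^h$; summing $\sum_i h|c_i^n|\le\|c^h\|_{\infty,1}$ (Proposition \ref{bound2} supplies the $L^\infty$ and $L^1$ bounds used throughout) gives the claimed estimate.

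The main obstacle I anticipate is precisely this fragmentation loss estimate. Unlike the coagulation defect, which is a clean algebraic discrepancy between a midpoint weighting and the factor $\tfrac12$, the loss error requires quantifying how accurately the discrete mass sum $h\sum_{j<i}x_j B_{j,i}$ reproduces $x_i$, and this must be done uniformly in $i$ while dividing by $x_i$. The delicate point is to verify that both the boundary interval and the quadrature error scale like $x_i h$ rather than $h$, so that no spurious factor $1/x_i$ survives as $x_i\to 0$. Once the $W^{1,\infty}_{\mathrm{loc}}$ regularity is used to render the per-cell quadrature error $O(h^2)$ and the endpoint interval is shown to contribute proportionally to $x_i$, the remaining steps are routine summations.
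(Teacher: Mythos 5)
Your proposal is sound and reaches the stated first-order conclusions, but it takes a genuinely different route from the paper's proof, and in one respect a more careful one. For the coagulation identity the paper offers no computation at all: it simply cites \cite{bourgade2008convergence}. Your symmetrisation argument --- pairing $j$ with $i-1-j$, using $K_{j,i-1-j}=K_{i-1-j,j}$ and $x_j+x_{i-1-j}=ih$ to obtain $\varepsilon(\mathbb{C},h)=-\tfrac{h^2}{4x_i}\sum_{j=0}^{i-1}K_{j,i-1-j}c_j^nc_{i-1-j}^n$ --- supplies exactly the omitted computation and recovers the displayed constant $\tfrac{R}{2}\|c^h\|_{L^\infty}^2\|K\|_{L^\infty}$. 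For the fragmentation loss term the two decompositions genuinely differ. The paper adds and subtracts the diagonal term $j=i$, declares $\varepsilon(\mathbb{F},h)=S_ic_i^nB_{i,i}h$, and identifies $\tfrac{S_ic_i^n}{x_i}\sum_{j=0}^{i}x_jB_{j,i}\Delta x_j$ with $S^h(x)c^h(s,x)$, which tacitly uses the discrete mass-conservation identity $\sum_{j=0}^{i}x_jB_{j,i}\Delta x_j=x_i$ as if it were exact; you instead use the continuous identity $\int_0^{x_i}xB(x,x_i)\,dx=x_i$ and estimate the resulting quadrature defect (the uncovered strip $(x_{i-1/2},x_i]$ plus cellwise Lipschitz errors). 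These are two bookkeepings of the same discrepancy: the discrete identity the paper treats as exact fails by precisely the quadrature error you estimate, so your version makes explicit a step the paper glosses over. The trade-off is in the constants: the paper's route yields the clean bound $R\|BS\|_{L^\infty}\|c^h\|_{\infty,1}h$ stated in the lemma, whereas your interior quadrature error carries an additional contribution of order $R\,\|S\|_{L^\infty}\|B\|_{W^{1,\infty}}\|c^h\|_{\infty,1}h$ (coming from $\sum_{j<i}x_jh\simeq x_i^2/2$), which cannot be absorbed into $\|BS\|_{L^\infty}$. So, strictly, you prove the lemma with a different $O(h)$ constant; since the lemma feeds only into the first-order estimate of Theorem \ref{errorth1}, this changes nothing downstream, but it should be flagged if the displayed constants are read literally.
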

	\begin{proof}
	The simplified version of coagulation part (\ref{convert1}) from (\ref{discrte1}) has been investigated in \cite{bourgade2008convergence}.
	Here, we would discuss only multiple fragmentation variation rate (\ref{discrte2}) to convert into  Eq. (\ref{convert2}) using a
	uniform mesh and having $x \in \Lambda_{i}^{h}$. Consider	
	\begin{align*}
	\frac{\mathcal{F}_{i+1/2}^n-\mathcal{F}_{i-1/2}^n}{x_{i}h}=& \sum_{j=i+1}^{\mathrm{I}^h}x_i S_{j} B_{i,j}c_{j}^{n}\Delta x_{j}-\frac{S_{i}c_{i}^{n}}{x_{i}}\sum_{j=0}^{i-1}x_j  B_{j,i}\Delta x_{j}\\ 
	= &\sum_{j=i+1}^{\mathrm{I}^h}x_i S_{j} B_{i,j}c_{j}^{n}\Delta x_{j}-\frac{S_{i}c_{i}^{n}}{x_{i}}\sum_{j=0}^{i}x_j  B_{j,i}\Delta x_{j}+S_{i}c_{i}^{n}B_{i,i}\Delta x_{i}\\
	=&\int_{\Xi^{h}(x)}^{R}S^{h}(x^{'})B^{h}(x,x^{'})c^{h}(s,x^{'})\,dx^{'}-S^{h}(x)c^{h}(s,x)+\varepsilon(\mathbb{F},h).
	\end{align*}
	Let $\varepsilon(\mathbb{F},h)= S_{i}c_{i}^{n}B_{i,i}h$ is considered, and when the $L_{1}$ norm of $\varepsilon(\mathbb{F},h)$ is calculated, the following term emerges
\begin{align*}
\|\varepsilon(\mathbb{F},h)\|_{L^1}&\leq\|BS\|_{L^{\infty}}h\sum_{i=1}^{\mathrm{I(h)}}\int_{\Lambda_{i}^{h}}c_{i}^{n}\,dx\\
& \leq \|BS\|_{L^{\infty}} \|c^{h}\|_{\infty,1}{h}\sum_{i=1}^{\mathrm{I(h)}}\int_{\Lambda_{i}^{h}}\,dx\\
	& \leq R\|BS\|_{L^{\infty}} \|c^{h}\|_{\infty,1} h.
\end{align*}
Now, to demonstrate the essential fact, Theorem 5.1, when combined with Eqs. (\ref{gun}), (\ref{convert1}) and (\ref{convert2}) yields
	\begin{align}\label{maineq2}
						\frac{\partial c^{h}(t,x)}{\partial t}	=&\frac{1}{2}\int_{0}^{\xi^h(x)}K^h(x^{'},x-\Xi^{h}(x^{'}))c^{h}(s,x^{'})c^{h}(s,x-\Xi^{h}(x^{'}))\,dx{'}-\int_{0}^{R}K^h(x,x^{'})c^{h}(s,x)c^{h}(s,x^{'})\,dx^{'} \nonumber \\
						& + \int_{\Xi^{h}(x)}^{R}S^{h}(x^{'})B^{h}(x,x^{'})c^{h}(s,x^{'})\,dx^{'}-S^{h}(x)c^{h}(s,x)+\varepsilon(\mathbb{C},h)+\varepsilon(\mathbb{F},h).
					\end{align}
			Finally, we may derive the error formulation for $t\in \tau_n$ from Eqs. (\ref{main}) and (\ref{maineq2}) as
	\begin{align}\label{errorfull}
	\int_{0}^{R}|c^h(t,x)-c(t,x)|dx \leq  \int_{0}^{R}|c^h(0,x)-c(0,x)|dx + \sum_{\beta=1}^{4}\epsilon_{\beta}(\mathbb{C},h) +\sum_{\beta=1}^{3}\epsilon_{\beta}(\mathbb{F},h)\nonumber \\ 
	+\int_{0}^{R}|\epsilon(t,n)|\,dx+\|\varepsilon(\mathbb{C},h)\|_{L^{1}} t+ \|\varepsilon(\mathbb{F},h)\|_{L^{1}} t,
	\end{align}
where error terms are denoted by $\epsilon_{\beta}(\mathbb{C}, h)$ with $\beta$= 1,2,3,4 in relation to the coagulation operator and estimation of these terms are calculated in \cite{bourgade2008convergence}. In this article, only $\epsilon_{\beta}(\mathbb{F}, h)$ with $\beta$= 1,2,3 for fragmentation operator and $\int_{0}^{R}|\epsilon(t,n)|\,dx$ would be estimated. Here,
	\begin{align*}
	\epsilon_{1}(\mathbb{F},h)= \int_{0}^{t}\int_{0}^{R}\int_{\Xi^{h}(x)}^{R}|S^{h}(x^{'})B^{h}(x,x^{'})c^{h}(s,x^{'})-S(x^{'})B(x,x^{'})c(s,x^{'})|\,dx^{'}\,dx\,ds,
	\end{align*}
\begin{align*}
\epsilon_{2}(\mathbb{F},h)= \int_{0}^{t}\int_{0}^{R}\int_{x}^{\Xi^{h}(x)}S(x^{'})B(x,x^{'})c(s,x^{'})\,dx^{'}\,dx\,ds,	
\end{align*}
and
\begin{align*}
\epsilon_{3}(\mathbb{F},h)= \int_{0}^{t}\int_{0}^{R}|S^{h}(x)c^{h}(s,x)-S(x)c(s,x)|\,dx\,ds.
\end{align*}
Furthermore, due to time discretization, assuming $|t-t_n|\leq \Delta t$ produces
\begin{align*}
\int_{0}^{R}|\epsilon(t,n)|\,dx\leq & \frac{1}{2}\int_{t_n}^{t}\int_{0}^{R}\int_{0}^{\xi^{h}(x)}K^h(x^{'},x-x^{'})c^{h}(s,x^{'})c^{h}(s,x-x^{'})\,dx^{'}\,dx\,ds\\
	& + \int_{t_n}^{t}\int_{0}^{R}\int_{0}^{R}K^h(x,x^{'})c^{h}(s,x)c^{h}(s,x^{'})\,dx^{'}\,dx\,ds\\
	& + \int_{t_n}^{t}\int_{0}^{R}\int_{\Xi^{h}(x)}^{R}S^{h}(x^{'})B^{h}(x,x^{'})c^{h}(s,x^{'})\,dx^{'}\,dx\,ds\\
	& + \int_{t_n}^{t}\int_{0}^{R}S^{h}(x)c^{h}(s,x)\,dx\,ds+ \int_{t_n}^{t}\int_{0}^{R} \varepsilon(\mathbb{C},h)\,dx\,ds+ \int_{t_n}^{t}\int_{0}^{R} \varepsilon(\mathbb{F},h)\,dx\,ds.
	\end{align*}
	
Utilizing the smoothness property of kernels, i.e., $K, B \, \text{and} \,\,S\in W_{loc}^{1,\infty}$, we have for all $x,y\in (0,R)$ 		$$|B^{h}(x,y)-B(x,y)|\leq \|K\|_{W^{1,\infty}} h. $$
Thus, it provides the estimation of $\epsilon_{1}(\mathbb{F},h)$  using the $L^{\infty}$ bound on $c^h$ and $c$. To see this, we split the expression into three parts as
		\begin{align*}
	\epsilon_{1}(\mathbb{F},h)\leq & \int_{0}^{t}\int_{0}^{R}\int_{0}^{R}|S^h(x^{'})-S(x^{'})|B(x,x^{'})c(s,x^{'})\,dx^{'}\,dx\,ds	\\
	& + \int_{0}^{t}\int_{0}^{R}\int_{0}^{R}S^h(x^{'})|B^h(x,x^{'})-B(x,x^{'})|c(s,x^{'})\,dx^{'}\,dx\,ds  \\
	&+\int_{0}^{t}\int_{0}^{R}\int_{0}^{R}S^h(x^{'})B^h(x,x^{'})|c^{h}(s,x^{'})-c(s,x^{'})\,dx^{'}\,dx\,ds  .
	\end{align*}					
	The above may be transformed to, by simplifying and using Proposition 5.2
		\begin{align}\label{frag1}
		\epsilon_{1}(\mathbb{F},h) \leq t  R^{2}  \|c\|_{\infty}  (\|B\|_{\infty}\|S\|_{W^{1,\infty}} + \|S\|_{\infty}\|B\|_{W^{1,\infty}}) h	+ R  \|BS\|_{\infty}\int_{0}^{t}\|c^{h}(s)-c(s)\|_{L^1}\,ds.
	\end{align}
	Similarly, one can compute
	\begin{align}\label{frag3}
	\epsilon_{3}(\mathbb{F},h) \leq \|c\|_{\infty} \|S\|_{W^{1,\infty}} t R h+\|S\|_{\infty} \int_{0}^{t}\|c^{h}(s)-c(s)\|_{L^1}\,ds,
	\end{align}
	and 
	\begin{align}\label{frag2}
							\epsilon_{2}(\mathbb{F},h) \leq \frac{t R}{2}\|BS\|_{\infty}\|c\|_{\infty} h.
						\end{align}
Now, let us move on to the remaining term $\int_{0}^{R}|\epsilon(t,n)|\,dx$, the error introduced by time discretization is resolved, and a bound is found as
		\begin{align}\label{time1}
		\int_{0}^{R}|\epsilon(t,n)|\,dx\leq \Big(\frac{3}{2}\|K\|_{\infty}\|c^h\|_{\infty}^{2}R^2+ (\|B\|_{\infty}R+1)\|S\|_{\infty}\|c^h\|_{\infty}R +\|\varepsilon(\mathbb{C},h)\|_{L^{1}} +  \|\varepsilon(\mathbb{F},h)\|_{L^{1}}\Big) \Delta t.
		\end{align}
				In conclusion, assemble all the bound estimations on $\epsilon_{\beta}(\mathbb{C}, h)$ for $\beta$=1,2,3,4  from \cite{bourgade2008convergence}, $\epsilon_{\beta}(\mathbb{F}, h)$ for $\beta$=1,2,3 from Eq. (\ref{frag1})-(\ref{frag2}) and the relation Eq. (\ref{time1}). Substituting all these estimations in (\ref{errorfull}) and applying the Gronwall's lemma conclude the proof as 
\begin{align*}\label{errorbound}
\|c^h-c\|_{L^{\infty}(0,T;L^{1}(0, R))} \leq D(T, R)(h+\Delta t).
\end{align*}						
\end{proof}
\section{Numerical Results}\label{errornu}
In this section, we numerically validate the conclusions obtained in Section \ref{errorde} for two test problems. As a result, the goal is solely to demonstrate experimental error and experimental order of convergence (EOC) for C-F equation having different kernels on uniform mesh. As, the analytical solutions are not available for such cases due to singularity in breakage kernel, the following expression is used to calculate the EOC
	\begin{align}
	EOC=\ln \left( \frac{\|N^{num}_{\mathrm{I}^h}-N^{num}_{2\mathrm{I}^h}\|}{\|N^{num}_{2\mathrm{I}^h}-N^{num}_{4\mathrm{I}^h}\|} \right)/\ln(2).
	\end{align}
Here, the total number of particles created by the FVS (\ref{gun}) with a mesh of $\mathrm{I}^h$ number of cells is denoted by $N^{num}_{\mathrm{I}^h}$. The computational domain of volume and time arguments in simulations are taken as $x =[1e-3, 100]$ and $t=100$ with $e^{-x}$ as the initial condition.
	\subsection{Test Case 1 }
	Consider the sum coagulation kernel, i.e.,  $K(x,y)= x+y$ with selection function $S(x)=x^{1/2},x^{1/4}$ and breakage function $B(x,y)= \frac{\alpha+2}{y}\bigg(\frac{x}{y}\bigg)^{\alpha}$, $\alpha =-1/2$. The EOC for uniform mesh is shown in Table 1. As expected from theoretical results, it is clear from the table that the FVS produces first-order convergence. The numerical errors are evaluated using 30, 60, 120, 240 and 480 degrees of freedom.
	
	\begin{table}[htb]
	 \begin{minipage}{.5\linewidth}
	  \centering
	    \begin{tabular}{ |p{1cm}|p{1.5cm}|p{1cm}|}
	    \hline
	 \multicolumn{3}{|c|}{$S(x)=x^{1/2}$} \\
	    \hline
	      Cells      & Error & EOC\\
	       \hline
	       30    & - & -  \\
	          \hline
	      60& 0.1499  & - \\
	           \hline
	       120  &   0.0272   &0.9743   \\
	          \hline
	        240  & 0.0039 & 0.9953  \\
	          \hline
	      480  & 0.0005 & 0.9992   \\
	 \hline
	           \end{tabular}
	           \end{minipage}
	           \begin{minipage}{.5\linewidth}
	           \centering
	            \begin{tabular}{ |p{1cm}|p{1.5cm}|p{1cm}|}
	                                          \hline
	         \multicolumn{3}{|c|}{$S(x)=x^{1/4}$} \\
	            \hline
	            Cells      & Error & EOC\\
	            \hline
	              30    & - &-   \\
	                 \hline
	              60& 0.0014  &- \\
	             \hline
	              120   & 0.0005      & 0.9997 \\
	                 \hline
	               240  & 0.0002 & 1.0001  \\
	               \hline
	               480  & 0.0001 & 1.0000    \\
	            \hline
	            \end{tabular}
	              \end{minipage}
	           \caption{EOC for Test Case 1}
	           \end{table}  
	            \subsection{Test Case 2 }
Now, we consider the same coagulation kernel and the selection functions taken in the previous test case but the breakage function $B(x,y)$ is used with $\alpha=-3/4$. The error and EOC of the scheme are reported in Table 2 by computing the errors for 30, 60, 120, 240 and 480 number of grid points, and it is observed again that the FVS is first order accurate. It should also be emphasized that the EOC is evaluated for a variety of distinct $\alpha$, $K$, and $S$. However, identical outcomes are achieved in every case, thus, the results are excluded here.
	            \begin{table}[htb]
	         \begin{minipage}{.5\linewidth}
	             \centering
	          \begin{tabular}{ |p{1cm}|p{1.5cm}|p{1cm}|}
	                \hline
	           \multicolumn{3}{|c|}{$S(x)=x^{1/2}$} \\
	             \hline
	               Cells      & Error & EOC\\
	                 \hline
	                30    & - &-   \\
	                   \hline
	              60&  0.1179 &- \\
	                 \hline
	               120  & 0.0195                & 0.9784 \\
	                    \hline
	                240  & 0.0026 & 0.9965  \\
	                \hline
	                  480  & 0.0003 &  0.9995  \\
	             \hline
	          \end{tabular}
	          \end{minipage}
	           \begin{minipage}{.5\linewidth}
	        \centering
	      \begin{tabular}{ |p{1cm}|p{1.5cm}|p{1cm}|}
	                         \hline
	      \multicolumn{3}{|c|}{$S(x)=x^{1/4}$} \\
	                 \hline
	                 Cells      & Error & EOC\\
	                         \hline
	                              30    &-  & -  \\
	                             \hline
	                   60& 0.0830  &- \\
	                    \hline
	                 120  &  0.0371  &0.9801  \\
	                                                         \hline
	                    240  & 0.0062 & 0.9930  \\
	                           \hline
	                  480  &0.0008  & 0.9988   \\
	                        \hline
	                                  \end{tabular}
	                                  \end{minipage}
	                       \caption{EOC for Test Case 2}
	                       \end{table}    
\section{Conclusions}
This work dealt with the convergence of finite volume truncated solutions towards a weak solution to the continuous coagulation and multiple fragmentation equations under the assumptions that coagulation kernel was locally bounded whereas breakage has singularity near the origin. The weak $L^1$ compactness argument was used to establish the result. Further, for more restricted classes of kernels, the scheme was shown to provide the first order error estimates on uniform meshes. This finding was verified numerically by taking different examples of coagulation and breakage kernels.  
	
\section{Acknowledgments}
The research described in this paper evolved as part of the research project (File Number: SRG/2019/001490) funded by the Science and Engineering Research Board, India.	
	
	%\clearpage
	\bibliography{reference}
	\bibliographystyle{ieeetr}

\end{document}